
\documentclass{birkjour}
%
%
%

 \theoremstyle{definition}
 
 \theoremstyle{remark}

 \numberwithin{equation}{section}

\usepackage{amsmath,amsthm,amsfonts,amssymb}
\usepackage[pdftex]{hyperref}

\usepackage{epstopdf}

\usepackage[final]{pdfpages}
\usepackage{subfig}

\usepackage{rotating}
\usepackage{pdfpages,url}
\usepackage{cancel}
\definecolor{Red}{rgb}{0.7,0,0}
\definecolor{Blue}{rgb}{0,0,0.8}
  \hypersetup{%
    hyperindex = {true},
    colorlinks = {true},
    linktocpage = {true},
    plainpages = {false},
    linkcolor = {Blue},
    citecolor = {Blue},
    urlcolor = {Red},
    pdfstartview = {Fit},
    pdfpagemode = {UseOutlines},
    pdfview = {XYZ null null null}
  }

\usepackage{tikz}
\usetikzlibrary{trees}
\usetikzlibrary{positioning,calc}

\def\fct#1{{\mathop{\rm #1}}}   
\def\Mid{\fct{mid\,}}          

\newcommand{\rpT}{\widehat{\mathcal{T}}^{\downarrow}}
\newcommand{\rpt}[1]{\hat{#1}^{\downarrow}}
\newcommand{\rt}[1]{{#1}^{\downarrow}}
\newcommand{\pt}[1]{\hat{#1}}
\newcommand{\pT}{\widehat{\mathcal{T}}}
\newcommand{\ps}{\mathcal{S}}
\newcommand{\rT}{\mathcal{T}^{\downarrow}}
\newcommand{\T}{\mathcal{T}}

\newcommand{\iV}{\breve{V}}

\newcommand{\comment}[1]{}
\newtheorem{theorem}{Theorem}[section]
\theoremstyle{definition}
\newtheorem{definition}[theorem]{Definition}
\newtheorem{model}[theorem]{Model}
\newtheorem{lemma}[theorem]{Lemma}
\newtheorem{remark}[theorem]{Remark}
\newtheorem{example}[theorem]{Example}
\newtheorem{con}[theorem]{Construction}
\def\calB{\mathcal{B}}
\def\N{\mathbb{N}}

\begin{document}

%
%
%
%
%
%
%
%
%

\title[Distributions on Finite Rooted Binary Trees]{Some Distributions on Finite Rooted Binary Trees}

\author[Cleary, S.]{Sean Cleary}
\address{Department of Mathematics, The City College of New York, NY 10031, USA}
\email{scleary@ccny.cuny.edu}

\author[Fischer, M.]{Mareike Fischer }
\address{Ernst-Moritz-Arndt University of Greifswald, Department for Mathematics and Computer Science, Walther-Rathenau-Str. 47, 17487 Greifswald, Germany}
\email{email@mareikefischer.de}

\thanks{corresponding author: Mareike Fischer}

\author[Griffiths, R.C.]{Robert C. Griffiths}
\address{Department of Statistics, University of Oxford, Oxford OX1 3TG, United Kingdom}
\email{griff@stats.ox.ac.uk}

\author[Sainudiin, R.]{Raazesh Sainudiin}
\address{Department of Mathematics, Uppsala University, Box 480, SE-751 06, Uppsala, Sweden}
\email{raazesh.sainudiin@gmail.com}

\subjclass{Primary 05C05; Secondary 60C05}

\keywords{plane trees, ranked trees, Catalan numbers, dyadic partitions, beta-splitting}

\date{\today}


\begin{abstract}
We introduce some natural families of distributions on rooted binary ranked plane trees with a view toward unifying ideas from various fields, including macroevolution, epidemiology, computational group theory, search algorithms and other fields.  
In the process we introduce the notions of split-exchangeability and plane-invariance of a general Markov splitting model in order to readily obtain probabilities over various equivalence classes of trees that arise in statistics, phylogenetics, epidemiology and group theory.
\end{abstract}

\maketitle

\section{Introduction}

We study some families of distributions on $\rpT_n$, the set of {\em rooted binary ranked plane trees} 
with $n$ unlabeled terminal nodes.  
$\rpT_n$ and their equivalence classes represent various binary tree spaces that are encountered under a myriad of names across several mathematical sciences.  
Our main objective is to introduce existing and novel families of distributions, based on recursively constructive 
randomized tree-generation algorithms, 
whereby the trees are grown from the root node by splitting one of the existing leaf 
nodes according to a probabilistic scheme, to generate distributions on 
$\rpT_{1:n} := \bigcup_{k=1}^n \rpT_k$ and their equivalence classes. 

The plan of the paper is as follows.  
In Section~\ref{S:ClassFRBTrees} we give a brief introduction to four main classes of finite rooted binary trees 
and a recursive randomized construction scheme for the finest class of trees 
that are in bijective correspondence to the permutations.  
In Section~\ref{S:OtherCons} we revisit the class of trees and their recursive construction using other representations. 
In Section~\ref{S:ProbDists} we introduce a nonparametric Markov splitting model on the finest class of ranked plane trees both directly and indirectly through their bijective correspondence with permutations or with dyadic partitions.  
Specific examples of the Markov splitting model are further characterized by split-exchangeability and plane-invariance in order to readily obtain probabilities over various equivalence classes of trees in statistics, phylogenetics, epidemiology and group theory.

\section{Classes of Finite Rooted Binary Trees}\label{S:ClassFRBTrees}

\subsection{Preliminaries}

Recall that a {\em rooted tree}, in the abstract graph-theoretic sense, is a connected acyclic graph with a specific node distinguished as the {\em root}.  
The {\em size} of tree is given by the number of its nodes.  A {\em finite tree} has finitely many nodes. 
In a rooted tree, the {\em outdegree} of a node is the number of its descendants.  
A {\em leaf} is a node of a tree without any descendants.  
Non-leaf nodes are also called {\em internal nodes} and leaf nodes are also called terminal nodes.  
In a {\em rooted binary tree}, every internal node has two descendants.
All trees in this study are finite, rooted and binary and hence we do not explicitly mention this.  
Thus, unless stated otherwise, by a tree we mean a finite rooted binary tree.  
Let $\T_n$ denote the set of all such trees with $n$ leaf nodes.

A tree whose nodes are labeled by distinct elements of a non-empty label set is a {\em labeled tree}.  
We can have a {\em semi-labeled} tree when only a subset of its nodes are labeled by distinct elements.  
Node labels are assigned by a {\em labeling function} from a set of nodes to a set of labels.  
For a formal treatment of semi-labeled trees in a phylogenetic setting see \cite[Dfn.~2.1.1--2]{Steel2003}.
For example, if only the leaf nodes are labeled we get a {\em leaf-labeled tree} and if only the internal nodes 
are labeled we get an {\em internal-labeled tree}.  
An {\em internal-ranking} is a labeling function from the $n-1$ internal nodes of a tree to 
the set of integers in $[n-1]:=\{1,2,\ldots,n-1\}$, which satisfies the following requirements. 
The root node has label or rank $1$, and if $v$ is an interior node which is on 
the path from an interior node $w$ to a leaf node, then the label or rank of $w$  is less than that of $v$. 
A tree together with an internal-ranking gives a {\em ranked tree}.
Let $\rT_n$ denote the set of all such ranked trees with $n$ leaf nodes.
Such trees are also known as {\em increasing trees} \cite{Flajolet}.
 
\comment{
We will formally define various types of phylogenetic trees on $n$ leaves we will observe throughout this paper.

\begin{figure}[htbp]
\begin{center}
\scalebox{0.32}{
\input{FigTrees.pdftex_t}
}
\caption{Example for a ranked labeled tree with leaf label set $\mathfrak{L}=\{1,2,3,4,5\}$, a labeled tree with $\mathfrak{L}=\{1,2,3,4,5\}$, a ranked tree shape and a tree shape (from left to right).\label{FigTrees}}
\end{center}
\end{figure}

\begin{definition} \label{DefTrees}
We define the following trees as in \cite[Sect.~2.4]{Steel2003}.
\begin{itemize}
\item[$(i)$] A \emph{ranked leaf-labeled tree} on $n$ leaves is a rooted binary tree with unique leaf labels from the label set $\mathfrak{L}$. The interior vertices have a total order $<$ assigned, which satisfies the following requirements. The root is the minimum in this order, and if $v$ is an interior vertex $v$ which is on the path from an interior vertex $w$ to a leaf, then $w<v$. Then, the root of the tree is given rank $1$, the second smallest element in this total order has rank $2$, etc.
\item[$(ii)$] A \emph{labeled tree} on $n$ leaves is a ranked labeled tree where the total order with the ranks are omitted.
\item[$(iii)$] A \emph{ranked tree shape} on $n$ leaves is a ranked labeled tree where the leaf labels are omitted.
\item[$(iv)$] A \emph{tree shape} on $n$ leaves is a labeled tree where the leaf labels are omitted.
\end{itemize}
\end{definition}
See \autoref{FigTrees} for an example.
}

\begin{figure}[hbpt]
\begin{tikzpicture}[level distance=0.7cm,
level 1/.style={sibling distance=1.5cm},
level 2/.style={sibling distance=1cm},
level 3/.style={sibling distance=0.5cm}]
\tikzstyle{every node}=[circle,draw]
\node (Root) [black, fill] {}
    child {
    node {} 
    child { node {} child {node [gray, fill]{} } child { node [gray, fill]{} } }
    child { node [gray, fill]{} }
}
child {
    node [gray, fill]{}
};
\end{tikzpicture}
\begin{tikzpicture}[level distance=0.7cm,
level 1/.style={sibling distance=1.5cm},
level 2/.style={sibling distance=1cm},
level 3/.style={sibling distance=0.5cm}]
\tikzstyle{every node}=[circle,draw]
\node (Root) [black, fill] {}
    child {
    node {} 
    child { node [gray, fill]{} }
    child { node {} child {node [gray, fill]{} } child { node [gray, fill]{} } }
}
child {
    node [gray, fill]{}
};
\end{tikzpicture}
\begin{tikzpicture}[level distance=0.7cm,
level 1/.style={sibling distance=1.5cm},
level 2/.style={sibling distance=1cm},
level 3/.style={sibling distance=0.5cm}]
\tikzstyle{every node}=[circle,draw]
\node (Root) [black, fill] {}
    child {
    node [gray, fill]{} 
}
child {
    node {}
    child { node {} child {node [gray, fill]{} } child { node [gray, fill]{} } }
    child { node [gray, fill]{} }
};
\end{tikzpicture}
\begin{tikzpicture}[level distance=0.7cm,
level 1/.style={sibling distance=1.5cm},
level 2/.style={sibling distance=1cm},
level 3/.style={sibling distance=0.5cm}]
\tikzstyle{every node}=[circle,draw]
\node (Root) [black, fill] {}
    child {
    node [gray, fill]{} 
}
child {
    node {}
    child { node [gray, fill]{} }
    child { node {} child {node [gray, fill]{} } child { node [gray, fill]{} } }
};
\end{tikzpicture}
\caption{Four distinct plane trees that represent the same tree with $4$ leaf nodes.  
In each tree, the root node is solid black, all other internal nodes are white with black boundary and leaf nodes are solid gray.\label{F:4RootedPlaneBinTrees}}
\end{figure}
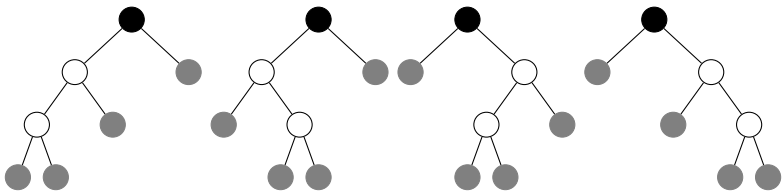

By superimposing additional structure on graph-theoretic trees we obtain trees that are known as plane, planar, oriented or ordered trees.  
A {\em plane tree} is defined as a tree in which subtrees dangling from a common node are ordered between themselves and represented from left to right in order.  
In addition to labels, nodes may also have {\em addresses} to encode their planar embedding or orientation.
It is convenient to assign addresses to the nodes of a plane tree to encode their planar embedding using strings 
formed by concatenation of {\sf L}'s and {\sf R}'s in the obvious manner as shown in 
Figure~\ref{F:NodeNamesRpbTree}.  
Note that all four plane trees in Figure~\ref{F:4RootedPlaneBinTrees} are subtrees of the tree in 
Figure~\ref{F:NodeNamesRpbTree} with addressed nodes.  
The addresses of the nodes of plane trees are clear from the planar drawing and usually not shown as in 
Figure~\ref{F:4RootedPlaneBinTrees}.  
A finite rooted binary tree that is plane is called a {\em plane tree} in this work.  
Figure~\ref{F:4RootedPlaneBinTrees} shows four distinct plane trees that represent the same (non-plane) tree.  
They are known as {\em plane binary trees} in enumerative combinatorics \cite[Ex.~6.19(d), p.~220]{Stanley1999}, {\em finite, rooted binary trees} in geometric group theory \cite[Ch.~10]{Meier2008}, or {\em binary search trees} in computer science \cite{Mahmoud1992}.  
They are less well known in evolutionary biology and may be referred to as {\em rooted binary unranked oriented tree shapes} by a natural extension of phylogenetic notions in \cite[Section~2.4]{Steel2003}.   
Let $\pT_n$ denote the set of all such plane trees with $n$ leaf nodes or equivalently with $n-1$ internal nodes.  
 
\begin{figure}[hbpt]
\begin{tikzpicture}[level distance=1.2cm,
level 1/.append style={sibling distance=5.2cm},
level 2/.append style={sibling distance=2.6cm, font=\small},
level 3/.append style={sibling distance=1.3cm, font=\scriptsize}]
\tikzstyle{every node}=[circle,draw]
\node (Root) [black] {$\rho$}
    child {
    node [black]{$\rho \sf{L}$} 
    child { node {$\rho \sf{LL}$} child {node [black]{$\rho \sf{LLL}$} } child { node [black]{$\rho \sf{LLR}$} } }
    child { node {$\rho \sf{LR}$} child {node [black]{$\rho \sf{LRL}$} } child { node [black]{$\rho \sf{LRR}$} } }
}
child {
    node [black]{$\rho \sf{R}$} 
    child { node {$\rho \sf{RL}$} child {node [black]{$\rho \sf{RLL}$} } child { node [black]{$\rho \sf{RLR}$} } }
    child { node {$\rho \sf{RR}$} child {node [black]{$\rho \sf{RRL}$} } child { node [black]{$\rho \sf{RRR}$} } }
};
\end{tikzpicture}
\caption{Addresses of nodes in plane trees.\label{F:NodeNamesRpbTree}}
\end{figure}
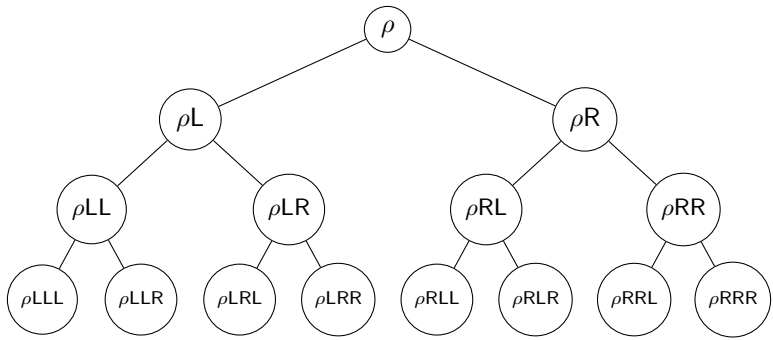

We obtain a {\em ranked plane tree} when an internal ranking is used to assign rank labels from $[n-1]$ 
to the $n-1$ internal (addressed) nodes of a plane tree with $n$ leaf nodes.   
We emphasize that the integer rank label associated with each node of a ranked plane tree is in addition to the node's address given by a string of {\sf L}'s and {\sf R}'s.  
Let $\rpT_n$ denote the set of all ranked plane trees with $n$ leaf nodes.  

\subsection{Ranked Plane Trees}

We can see the rank labels or ranks of a ranked plane tree $\rpt{t}_n \in \rpT_n$ as 
the result of a recursive splitting process of Construction~\ref{R:PlaneBinaryTree} where at the $k$-th step an 
unlabeled leaf node of $\rpt{t}_k \in \rpT_k$ is split and thus transformed into an internal node 
with rank $k$ in order to produce a ranked plane tree $\rpt{t}_{k+1} \in \rpT_{k+1}$ with 
$k$ internal nodes (that have been split) and $k+1$ unlabeled leaf nodes.  
Thus, the ranks encode the splitting order and thereby give the entire history of the process that recursively 
created the tree $\rpt{t}_n \in \rpT_n$ after $n-1$ recursive splits starting from the root node in $\rpT_1$.  

One can assign a random variable $I_k$ over the current set of $k$ leaf nodes (indexed by their addresses) of a 
ranked plane tree and choose a random leaf node according to $I_k$ for the next split.  
This can be used to recursively generate random trees in $\rpT_{1:n} := \bigcup_{k=1}^n \rpT_k$ with up to $n$ 
leaves starting from the root node 
on the basis of the random variables $I_{1:n-1} := \{I_1, I_2,\ldots,I_{n-1}\}$ as follows.

\begin{con}[Ranked Plane Trees]\label{R:PlaneBinaryTree} Consider the following process: 
\begin{itemize}
\item {\sf Initialize}: 
\begin{itemize}
\item $i \gets 1$, set counter 
\item let $\rpt{t}_1=(V,E)=\left(\{\rho\},\{\}\right)$ be a ranked plane tree which consists only of a single root node $\rho$ in $V$, which we consider an unlabeled leaf node, and no edges in $E$.  
\end{itemize}
\item {\sf Randomize}: 
Choose a leaf node $\ell$ of $\rpt{t}_i$ according to the random variable $I_i$ that may depend on $\rpt{t}_i$.
\item {\sf Split $\ell$}: 
\begin{itemize}
\item label $\ell$ by rank $i$,  
\item attach two new leaf nodes $\ell\mathsf{L}$ and $\ell\mathsf{R}$ to the left and the right of $\ell$, respectively -- i.e.~introduce nodes $\ell\mathsf{L}$ and $\ell\mathsf{R}$, and edges $(\ell,\ell\mathsf{L})$ and $(\ell,\ell\mathsf{R})$ to $\rpt{t}_i$,
\item $i \gets i+1$, increment counter 
\end{itemize}
\item {\sf Repeat}: Go to the {\sf Randomize} step if $i$, the number of leaf nodes in $\rpt{t}_i$, is less than a given $n \in \mathbb{N}$.
\end{itemize}
\end{con}

The distribution of the random ranked plane tree $\rpt{t}_n$ produced by the recursive splitting process in 
Construction~\ref{R:PlaneBinaryTree} is determined by $I_{0:n-1}$.  
See Figure~\ref{F:SplitingRecursivelyThrice}(A) for the Hasse diagram on $\rpT_{1:4}$ when $n = 4$.  
By ignoring planarity but not the ranks in $\rpT_{1:n}$ we get random ranked trees (Figure~\ref{F:SplitingRecursivelyThrice}(B)). 
Ignoring the ranks (internal node labels) in $\rpT_{1:n}$ gives random plane trees (Figure~\ref{F:SplitingRecursivelyThrice}(C)).  
Finally by ignoring planarity in addition to the ranks in $\rpT_{1:n}$ we get random trees (Figure~\ref{F:SplitingRecursivelyThrice}(D)).

\begin{figure}[hbpt]
\centering
\begin{tabular}{cccc}
{\subfloat[ranked plane trees]{\fbox{\resizebox{0.64\textwidth}{!}{\begin{tikzpicture}[remember picture,
inner/.append style={},
outer/.append style={level distance=2cm, sibling distance=0.5cm}
]
\node[outer,draw=green,label={[xshift=-0.2cm]{\huge $321$}}] (T3321){
\begin{tikzpicture}[level distance=1.0cm,
level 1/.style={sibling distance=1.5cm},
level 2/.style={sibling distance=1cm},
level 3/.style={sibling distance=0.75cm}]
\tikzstyle{every node}=[circle,draw]
\node (Root)  {$1$}
    child {
    node {$2$} 
    child { node {$3$} child {node [gray, fill]{} } child { node [gray, fill]{} } }
    child { node [gray, fill]{} }
}
child {
    node [gray, fill]{}
};
\end{tikzpicture}
};

\node[outer,draw=green,right=of T3321,label={[xshift=0.6cm]{\huge $231$}}] (T2331) {
\begin{tikzpicture}[level distance=1.0cm,
level 1/.style={sibling distance=1.5cm},
level 2/.style={sibling distance=1.0cm},
level 3/.style={sibling distance=0.75cm}]
\tikzstyle{every node}=[circle,draw]
\node (Root){$1$}
    child {
    node {$2$} 
    child { node [gray,fill]{} }
    child { node {$3$} child {node [gray, fill]{} } child { node [gray, fill]{} } }
}
child {
    node [gray, fill]{}
};
\end{tikzpicture}
};
\node[outer,draw=green,right=of T2331,label={[xshift=0.6cm]{\huge $213$}}] (T2222-1) {
\begin{tikzpicture}[level distance=1.0cm,
level 1/.style={sibling distance=1.5cm},
level 2/.style={sibling distance=1cm},
level 3/.style={sibling distance=0.75cm}]
\tikzstyle{every node}=[circle,draw]
\node (Root)  {$1$}
    child {
    node {$2$} child {node [gray, fill]{} } child { node [gray, fill]{} } 
}
child {
    node {$3$} child {node [gray, fill]{} } child { node [gray, fill]{} } 
};
\end{tikzpicture}
};
\node[outer,draw=green,right=of T2222-1,label={[xshift=-1.0cm]{\huge $312$}}] (T2222-2) {
\begin{tikzpicture}[level distance=1.0cm,
level 1/.style={sibling distance=1.5cm},
level 2/.style={sibling distance=1cm},
level 3/.style={sibling distance=0.75cm}]
\tikzstyle{every node}=[circle,draw]
\node (Root)  {$1$}
    child {
    node {$3$} child {node [gray, fill]{} } child { node [gray, fill]{} } 
}
child {
    node {$2$} child {node [gray, fill]{} } child { node [gray, fill]{} } 
};
\end{tikzpicture}
};
\node[outer,draw=green,right=of T2222-2,label={[xshift=0.6cm]{\huge $132$}}] (T1332) {
\begin{tikzpicture}[level distance=1.0cm,
level 1/.style={sibling distance=1.5cm},
level 2/.style={sibling distance=1cm},
level 3/.style={sibling distance=0.75cm}]
\tikzstyle{every node}=[circle,draw]
\node (Root)  {$1$}
    child {
    node [gray, fill]{} 
}
child {
    node {$2$}
    child { node {$3$} child {node [gray, fill]{} } child { node [gray, fill]{} } }
    child { node [gray, fill]{} }
};
\end{tikzpicture}
};
\node[outer,draw=green,right=of T1332,label={[xshift=0.6cm]{\huge $123$}}] (T1233) {
\begin{tikzpicture}[level distance=1.0cm,
level 1/.style={sibling distance=1.5cm},
level 2/.style={sibling distance=1cm},
level 3/.style={sibling distance=0.75cm}]
\tikzstyle{every node}=[circle,draw]
\node (Root)  {$1$}
    child {
    node [gray, fill]{} 
}
child {
    node {$2$}
    child { node [gray, fill]{} }
    child { node {$3$} child {node [gray, fill]{} } child { node [gray, fill]{} } }
};
\end{tikzpicture}
};
\node[outer,draw=green,label={[xshift=-0.2cm]{\huge $21$}},above=of T2331] (T221) {
\begin{tikzpicture}[level distance=1.0cm,
level 1/.style={sibling distance=1.5cm},
level 2/.style={sibling distance=1cm},
level 3/.style={sibling distance=0.75cm}]
\tikzstyle{every node}=[circle,draw]
\node (Root)  {$1$}
    child {
    node {$2$} child {node [gray, fill]{} } child { node [gray, fill]{} } 
}
child {
    node [gray, fill]{}
};
\end{tikzpicture}
};
\node[outer,draw=green,label={[xshift=0.2cm]{\huge $12$}},above=of T1332] (T122) {
\begin{tikzpicture}[level distance=1.0cm,
level 1/.style={sibling distance=1.5cm},
level 2/.style={sibling distance=1cm},
level 3/.style={sibling distance=0.75cm}]
\tikzstyle{every node}=[circle,draw]
\node (Root)  {$1$}
    child {
    node [gray, fill]{}
}
child {
    node {$2$} child {node [gray, fill]{} } child { node [gray, fill]{} } 
};
\end{tikzpicture}
};
\node[outer,draw=green,label={[xshift=-0.2cm]{\huge $1$}},above left = of T122,xshift=-2.0cm] (T11) {
\begin{tikzpicture}[level distance=1.0cm,
level 1/.style={sibling distance=1.5cm},
level 2/.style={sibling distance=1cm},
level 3/.style={sibling distance=0.75cm}]
\tikzstyle{every node}=[circle,draw]
\node (Root)  {$1$}
    child {
    node [gray, fill]{}
}
child {
    node [gray, fill]{}
};
\end{tikzpicture}
};
\node[outer,draw=green,label={[xshift=-0.2cm]{}},above = of T11,xshift=-0.0cm] (T0) {
\begin{tikzpicture}[level distance=1.0cm,
level 1/.style={sibling distance=1.5cm},
level 2/.style={sibling distance=1cm},
level 3/.style={sibling distance=0.75cm}]
\tikzstyle{every node}=[circle,draw]
\node (Root)  [gray, fill]{};
\end{tikzpicture}
};
\draw[black,->] (T0.south) -- (T11.north);
\draw[black,->] (T11.south) -- (T221.north);
\draw[black,->] (T11.south) -- (T122.north);
\draw[black,->] (T221.south) -- (T2222-1.north);
\draw[black,->] (T122.south) -- (T2222-2.north);
\draw[black,->] (T122.south) -- (T1332.north);
\draw[black,->] (T122.south) -- (T1233.north);
\draw[black,->] (T221.south) -- (T3321.north);
\draw[black,->] (T221.south) -- (T2331.north);
\end{tikzpicture}}}}} & 
\subfloat[ranked trees]{\fbox{\resizebox{0.26\textwidth}{!}{\begin{tikzpicture}[remember picture,
inner/.append style={},
outer/.append style={level distance=2cm, sibling distance=0.5cm}
]
\node[outer,draw=green,label={[xshift=-0.4cm]{\Large $321$}}] (T1233) {
\begin{tikzpicture}[level distance=0.7cm,
level 1/.style={sibling distance=1.5cm},
level 2/.style={sibling distance=1cm},
level 3/.style={sibling distance=0.5cm}]
\tikzstyle{every node}=[circle,draw]
\node (Root)  {$1$}
    child {
    node {$2$} 
    child { node {$3$} child {node [gray, fill]{} } child { node [gray, fill]{} } }
    child { node [gray, fill]{} }
}
child {
    node [gray, fill]{}
};
\end{tikzpicture}
};
\node[outer,draw=green,label={[xshift=0.4cm]{\Large $213$}},right=of T1233] (T2222) {
\begin{tikzpicture}[level distance=0.7cm,
level 1/.style={sibling distance=1.5cm},
level 2/.style={sibling distance=1cm},
level 3/.style={sibling distance=0.5cm}]
\tikzstyle{every node}=[circle,draw]
\node (Root)  {$1$}
    child {
    node {$2$} child {node [gray, fill]{} } child { node [gray, fill]{} } 
}
child {
    node {$3$} child {node [gray, fill]{} } child { node [gray, fill]{} } 
};
\end{tikzpicture}
};
\node[outer,draw=green,label={[xshift=-0.4cm]{\Large $21$}},above=of T2222,xshift=-1.5cm] (T122) {
\begin{tikzpicture}[level distance=0.7cm,
level 1/.style={sibling distance=1.5cm},
level 2/.style={sibling distance=1cm},
level 3/.style={sibling distance=0.5cm}]
\tikzstyle{every node}=[circle,draw]
\node (Root)  {$1$}
    child {
    node {$2$} child {node [gray, fill]{} } child { node [gray, fill]{} } 
}
child {
    node [gray, fill]{}
};
\end{tikzpicture}
};
\node[outer,draw=green,label={[xshift=-0.2cm]{\Large $1$}},above = of T122,xshift=-0.0cm] (T11) {
\begin{tikzpicture}[level distance=0.7cm,
level 1/.style={sibling distance=1.5cm},
level 2/.style={sibling distance=1cm},
level 3/.style={sibling distance=0.5cm}]
\tikzstyle{every node}=[circle,draw]
\node (Root)  {$1$}
    child {
    node [gray, fill]{}
}
child {
    node [gray, fill]{}
};
\end{tikzpicture}
};
\node[outer,draw=green,label={[xshift=-0.2cm]{}},above = of T11,xshift=-0.0cm] (T0) {
\begin{tikzpicture}[level distance=0.7cm,
level 1/.style={sibling distance=1.5cm},
level 2/.style={sibling distance=1cm},
level 3/.style={sibling distance=0.5cm}]
\tikzstyle{every node}=[circle,draw]
\node (Root)  [gray, fill]{};
\end{tikzpicture}
};
\draw[black,->] (T0.south) -- (T11.north);
\draw[black,->] (T11.south) -- (T122.north);
\draw[black,->] (T122.south) -- (T2222.north);
\draw[black,->] (T122.south) -- (T1233.north);
\end{tikzpicture}}}} \\
\subfloat[plane trees]{\fbox{\resizebox{0.64\textwidth}{!}{\begin{tikzpicture}[remember picture,
inner/.append style={},
outer/.append style={level distance=2cm, sibling distance=0.5cm}
]
\node[outer,draw=green] (T3321) {
\begin{tikzpicture}[level distance=0.7cm,
level 1/.style={sibling distance=1.5cm},
level 2/.style={sibling distance=1cm},
level 3/.style={sibling distance=0.5cm}]
\tikzstyle{every node}=[circle,draw]
\node (Root)  {}
    child {
    node {} 
    child { node {} child {node [gray, fill]{} } child { node [gray, fill]{} } }
    child { node [gray, fill]{} }
}
child {
    node [gray, fill]{}
};
\end{tikzpicture}
};
\node[outer,draw=green,right=of T3321] (T2331) {
\begin{tikzpicture}[level distance=0.7cm,
level 1/.style={sibling distance=1.5cm},
level 2/.style={sibling distance=1cm},
level 3/.style={sibling distance=0.5cm}]
\tikzstyle{every node}=[circle,draw]
\node (Root)  {}
    child {
    node {} 
    child { node [gray, fill]{} }
    child { node {} child {node [gray, fill]{} } child { node [gray, fill]{} } }
}
child {
    node [gray, fill]{}
};
\end{tikzpicture}
};
\node[outer,draw=green,right=of T2331] (T2222) {
\begin{tikzpicture}[level distance=0.7cm,
level 1/.style={sibling distance=1.5cm},
level 2/.style={sibling distance=1cm},
level 3/.style={sibling distance=0.5cm}]
\tikzstyle{every node}=[circle,draw]
\node (Root)  {}
    child {
    node {} child {node [gray, fill]{} } child { node [gray, fill]{} } 
}
child {
    node {} child {node [gray, fill]{} } child { node [gray, fill]{} } 
};
\end{tikzpicture}
};
\node[outer,draw=green,right=of T2222] (T1332) {
\begin{tikzpicture}[level distance=0.7cm,
level 1/.style={sibling distance=1.5cm},
level 2/.style={sibling distance=1cm},
level 3/.style={sibling distance=0.5cm}]
\tikzstyle{every node}=[circle,draw]
\node (Root)  {}
    child {
    node [gray, fill]{} 
}
child {
    node {}
    child { node {} child {node [gray, fill]{} } child { node [gray, fill]{} } }
    child { node [gray, fill]{} }
};
\end{tikzpicture}
};
\node[outer,draw=green,right=of T1332] (T1233) {
\begin{tikzpicture}[level distance=0.7cm,
level 1/.style={sibling distance=1.5cm},
level 2/.style={sibling distance=1cm},
level 3/.style={sibling distance=0.5cm}]
\tikzstyle{every node}=[circle,draw]
\node (Root)  {}
    child {
    node [gray, fill]{} 
}
child {
    node {}
    child { node [gray, fill]{} }
    child { node {} child {node [gray, fill]{} } child { node [gray, fill]{} } }
};
\end{tikzpicture}
};
\node[outer,draw=green,above=of T2331] (T221) {
\begin{tikzpicture}[level distance=0.7cm,
level 1/.style={sibling distance=1.5cm},
level 2/.style={sibling distance=1cm},
level 3/.style={sibling distance=0.5cm}]
\tikzstyle{every node}=[circle,draw]
\node (Root)  {}
    child {
    node {} child {node [gray, fill]{} } child { node [gray, fill]{} } 
}
child {
    node [gray, fill]{}
};
\end{tikzpicture}
};
\node[outer,draw=green,above=of T1332] (T122) {
\begin{tikzpicture}[level distance=0.7cm,
level 1/.style={sibling distance=1.5cm},
level 2/.style={sibling distance=1cm},
level 3/.style={sibling distance=0.5cm}]
\tikzstyle{every node}=[circle,draw]
\node (Root)  {}
    child {
    node [gray, fill]{}
}
child {
    node {} child {node [gray, fill]{} } child { node [gray, fill]{} } 
};
\end{tikzpicture}
};
\node[outer,draw=green,above left = of T122,xshift=-0.8cm] (T11) {
\begin{tikzpicture}[level distance=0.7cm,
level 1/.style={sibling distance=1.5cm},
level 2/.style={sibling distance=1cm},
level 3/.style={sibling distance=0.5cm}]
\tikzstyle{every node}=[circle,draw]
\node (Root)  {}
    child {
    node [gray, fill]{}
}
child {
    node [gray, fill]{}
};
\end{tikzpicture}
};
\node[outer,draw=green,above = of T11,xshift=-0.0cm] (T0) {
\begin{tikzpicture}[level distance=0.7cm,
level 1/.style={sibling distance=1.5cm},
level 2/.style={sibling distance=1cm},
level 3/.style={sibling distance=0.5cm}]
\tikzstyle{every node}=[circle,draw]
\node (Root)  [gray, fill]{};
\end{tikzpicture}
};
\draw[black,->] (T0.south) -- (T11.north);
\draw[black,->] (T11.south) -- (T221.north);
\draw[black,->] (T11.south) -- (T122.north);
\draw[black,->] (T221.south) -- (T2222.north);
\draw[black,->] (T122.south) -- (T2222.north);
\draw[black,->] (T122.south) -- (T1332.north);
\draw[black,->] (T122.south) -- (T1233.north);
\draw[black,->] (T221.south) -- (T3321.north);
\draw[black,->] (T221.south) -- (T2331.north);
\end{tikzpicture}}}} & 
\subfloat[trees]{\fbox{\resizebox{0.26\textwidth}{!}{\begin{tikzpicture}[remember picture,
inner/.append style={},
outer/.append style={level distance=2cm, sibling distance=0.5cm}
]
\node[outer,draw=green] (T1233) {
\begin{tikzpicture}[level distance=0.7cm,
level 1/.style={sibling distance=1.5cm},
level 2/.style={sibling distance=1cm},
level 3/.style={sibling distance=0.5cm}]
\tikzstyle{every node}=[circle,draw]
\node (Root)  {}
    child {
    node {} 
    child { node {} child {node [gray, fill]{} } child { node [gray, fill]{} } }
    child { node [gray, fill]{} }
}
child {
    node [gray, fill]{}
};
\end{tikzpicture}
};
\node[outer,draw=green,right=of T1233] (T2222) {
\begin{tikzpicture}[level distance=0.7cm,
level 1/.style={sibling distance=1.5cm},
level 2/.style={sibling distance=1cm},
level 3/.style={sibling distance=0.5cm}]
\tikzstyle{every node}=[circle,draw]
\node (Root)  {}
    child {
    node {} child {node [gray, fill]{} } child { node [gray, fill]{} } 
}
child {
    node {} child {node [gray, fill]{} } child { node [gray, fill]{} } 
};
\end{tikzpicture}
};
\node[outer,draw=green,above=of T2222,xshift=-1.5cm] (T122) {
\begin{tikzpicture}[level distance=0.7cm,
level 1/.style={sibling distance=1.5cm},
level 2/.style={sibling distance=1cm},
level 3/.style={sibling distance=0.5cm}]
\tikzstyle{every node}=[circle,draw]
\node (Root)  {}
    child {
    node {} child {node [gray, fill]{} } child { node [gray, fill]{} } 
}
child {
    node [gray, fill]{}
};
\end{tikzpicture}
};
\node[outer,draw=green,above = of T122,xshift=-0.0cm] (T11) {
\begin{tikzpicture}[level distance=0.7cm,
level 1/.style={sibling distance=1.5cm},
level 2/.style={sibling distance=1cm},
level 3/.style={sibling distance=0.5cm}]
\tikzstyle{every node}=[circle,draw]
\node (Root)  {}
    child {
    node [gray, fill]{}
}
child {
    node [gray, fill]{}
};
\end{tikzpicture}
};
\node[outer,draw=green,above = of T11,xshift=-0.0cm] (T0) {
\begin{tikzpicture}[level distance=0.7cm,
level 1/.style={sibling distance=1.5cm},
level 2/.style={sibling distance=1cm},
level 3/.style={sibling distance=0.5cm}]
\tikzstyle{every node}=[circle,draw]
\node (Root)  [gray, fill]{};
\end{tikzpicture}
};
\draw[black,->] (T0.south) -- (T11.north);
\draw[black,->] (T11.south) -- (T122.north);
\draw[black,->] (T122.south) -- (T2222.north);
\draw[black,->] (T122.south) -- (T1233.north);
\end{tikzpicture}}}} \\
\end{tabular}
\caption{Hasse diagram of the recursive splitting process in Construction~\ref{R:PlaneBinaryTree} to generate 
(A) ranked plane trees in $\rpT_{1:4}$, 
(B) ranked (non-plane) trees in $\rT_{1:4}$, 
(C) plane trees (unranked) in $\pT_{1:4}$, 
and (D) trees (unranked non-plane) in $\T_{1:4}$ with up to three splits and four leaf nodes.  
The permutation representations for trees in (A) and (B) are also given.
\label{F:SplitingRecursivelyThrice}}
\end{figure}
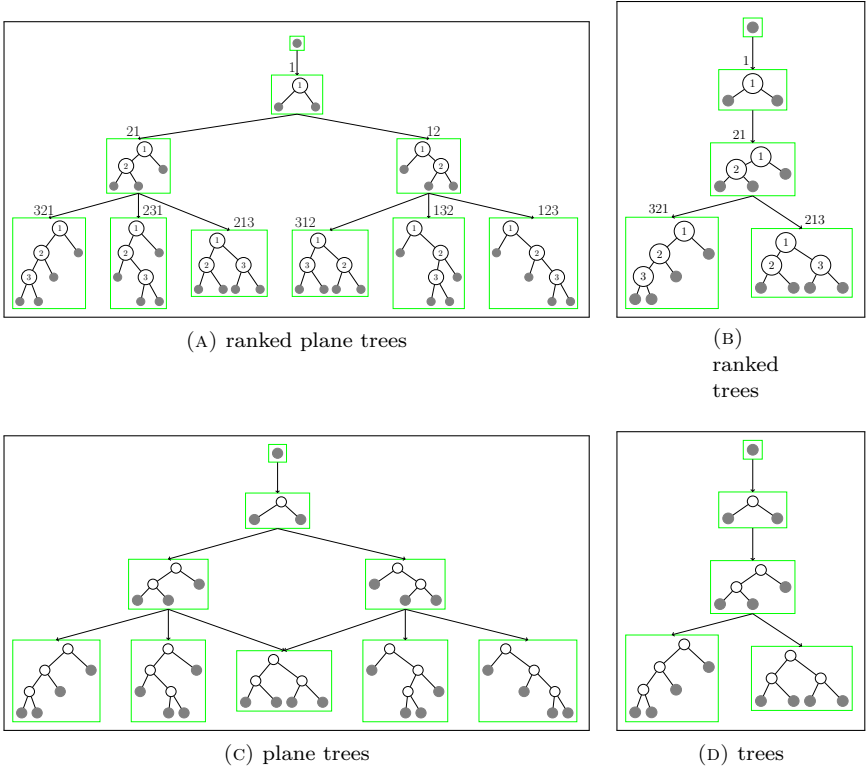

Clearly, every tree $\rpt{t}_n \in \rpT_n$ with $n$ leaves is obtained by 
Construction~\ref{R:PlaneBinaryTree} in a unique way since at the $k$-th splitting step we choose exactly one of 
the available $k$ leaf nodes to split for each $k\in[n-1]$.  
Thus $|\rpT_n|=(n-1)!$.  
There is a simple bijective correspondence between $\rpT_n$ and the $(n-1)!$ permutations of $[n-1]$ 
using the {\em increasing binary tree lifting} (see \cite[Ex.~17, p.~132]{Flajolet} and the references therein).
The bijection, $\rpT_n \ni \rpt{t}_n \leftrightarrow \sigma \in \ps_{n-1}$, 
shown for $n\leq4$ in Figure~\ref{F:SplitingRecursivelyThrice}(A), is given by the following 
Construction~\ref{R:TreeLifting}.
\begin{con}[Tree lifting bijection]\label{R:TreeLifting} Consider the following process: 
\begin{itemize}
\item
Write the permutation $\sigma \in \ps_{n-1}$ as a word $\sigma = \sigma_1\sigma_1\cdots\sigma_{n-1}$. 
\item 
If $\min(\sigma)$ is the minimum letter of $\sigma$, then $\sigma$, as a word, can be decomposed 
into three terms of the form 
$\sigma = \sigma_{\sf L} \cdot \min(\sigma) \cdot \sigma_{\sf R}$, 
with $\sigma_{\sf L},\sigma_{\sf R}$ the words to the left and right of $\min(\sigma)$.  
\item
Then $\rpt{t}_n(\sigma)$, the ranked plane tree corresponding to the given permutation $\sigma$ is obtained by recursively using this decomposition:
\begin{itemize}
\item The empty tree goes with the empty permutation $\varepsilon$. 
\item The root node of the tree $\rpt{t}_n(\sigma)$ gets rank $\min(\sigma)$ with the left and right subtrees 
constructed recursively with $\rpt{t}_n(\sigma_{\sf L})$ and $\rpt{t}_n(\sigma_{\sf R})$, respectively.  
\end{itemize}
\item Conversely, you can get $\sigma$ from a ranked plane tree $\rpt{t}_n$ by simply reading the ranks at the $n-1$ 
internal nodes of $\rpt{t}_n$ in symmetric (in-fix) order.
\end{itemize}
\end{con}

\subsection{Ranked Trees}

The number of ranked trees with $n$ leaves, $|\rT_n|$, is given by the {\em Euler zigzag numbers} \cite{EulerNumbers2013}.  
Ranked trees have been studied in evolutionary biology by Tajima \cite{Tajima1983} as {\em evolutionary relationships} among $n$ nucleons and have recently been given a coalescent re-formulation \cite{Sainudiin2015}.  
They are called {\em unlabeled ranked binary dendrograms} in \cite{Murtagh1984} and can be represented by 
a subset of $\ps_{n-1}$ \cite{Foata1971}.  
Recall the {\em increasing binary tree lifting} that gave a bijection between $\ps_{n-1}$ and $\rpT_n$.  
The idea is to choose a {\em standard} permutation to represent each $\rt{t}$. 
We can use a permutation in $\ps_{n-1}$ to construct a ranked tree in $\rT_n$ 
by modifying Construction~\ref{R:TreeLifting} with a non-planar standard form constraint akin to 
\cite[Section~6]{Murtagh1984}.  

Thus, to obtain a ranked tree $\rt{t} \in \rT_n$ that can be drawn in the plane in a unique way from any 
permutation $\pi \in \ps_{n-1}$ just apply the Construction~\ref{R:TreeLifting} with the additional constraints that must be satisfied by the rank labels at the internal nodes:
\begin{enumerate}
\item if only one of the child nodes $v$ of an internal node $u$ is internal, 
then $v$ is drawn to the left of $u$;
\item if both child nodes $v$ and $w$ of an internal node $u$ are internal with $v < w $, 
then $v$ is drawn on the left of $u$ and $w$ on the right.
\end{enumerate}
Thus, we can use the following non-plane and possibly flipped decomposition of $\sigma$ 
$$
\sigma = \sigma_{\overline{\sf L}} \cdot \min(\sigma) \cdot \sigma_{\overline{\sf R}} =  
\begin{cases}
\sigma_{\sf L} \cdot \min(\sigma) \cdot \sigma_{\sf R} & \text{ if } \min(\sigma_{\sf L}) < \min(\sigma_{\sf R}) 
\text{ or } \sigma_{\sf R} = \varepsilon\\
\sigma_{\sf R} \cdot \min(\sigma) \cdot \sigma_{\sf L} & \text{ if } \min(\sigma_{\sf L}) > \min(\sigma_{\sf R}) 
\text{ or } \sigma_{\sf L} = \varepsilon
\end{cases}
$$
as the only modification in Construction~\ref{R:TreeLifting} 
such that the root node of the tree $\rt{t}_n(\sigma)$ gets 
rank $\min(\sigma)$ with the left and right subtrees 
constructed recursively with $\rt{t}_n(\sigma_{\overline{\sf L}})$ and $\rt{t}_n(\sigma_{\overline{\sf R}})$, 
respectively.  
After constructing such a ranked tree $\rt{t}_n$ from a permutation $\sigma \in \ps_{n-1}$ we can obtain the 
representative non-plane permutation (corresponding to an equivalence class in $\ps_{n-1}$ or 
equivalently in $\rpT_n$) by just reading the ranks at the internal nodes of $\rt{t}_n$ according to 
in-fix order as before.  
Let the set of such representative non-plane permutations be $\mathcal{A}_{n-1}$.  
Clearly $\mathcal{A}_{n-1} \subset \ps_{n-1}$ and 
the above modification to Construction~\ref{R:TreeLifting} due to \cite{Foata1971} 
gives a bijective correspondence $\mathcal{A}_{n-1} \ni \sigma \leftrightarrow \rt{t}_n \in \rT_n$.  
The non-plane permutation representation of ranked trees can be used to enumerate $\mathcal{A}_{n-1}$ which is 
equinumerous to $\rT_n$ using the following recursion:
\begin{eqnarray*}
|\mathcal{A}_{n}|=|\rT_{n+1}|=e(n)
&=&\frac{1}{2}\sum_{k=0}^{n-1}{\left( \binom{n-1}{k} e(k) e(n-k-1)\right)}\\
e(0)&=&e(1)=1
\end{eqnarray*}
The proof is identical to that in \cite[p.~196]{Murtagh1984} although ranks are assigned there in decreasing 
order from the root node. 
Thus, for 
$$n=1,2,\ldots,10, \ |\rT_{n}|=1,1,1,2,5,16,61,272,1385,7936,$$ 
respectively \cite{EulerNumbers2013}.  
Elements of $\rT_{n}$ and $\mathcal{A}_{n-1}$ for $n\in \{1,2,3,4\}$ are shown in 
Figure~\ref{F:SplitingRecursivelyThrice}(B).
We remark in passing that $\mathcal{A}_{n-1}$ is the set of Andr\'e permutations of the second kind (in reverse) 
and that the two kinds of alternating permutations of $[n-1]$ and $\mathcal{A}_{n-1}$ or $\rpT_n$ are in bijective 
correspondence \cite{Donaghey1975}.

\comment{\color{red} Looking at new Figure with 5 of the 5 leaved ranked trees, 
the corresponding permutations in are (from left to right)
(4123),(1234),(3412),(1423) and (3124). 
}

Let a {\em cherry node} be an internal node that has two leaf nodes as its children.
If $\gimel(\rt{t}_n)$ be the number of {\em cherry} nodes of $\rt{t}_n$ then 
$$
|\{\rpt{t}_n \in \rpT_n : \rpt{t}_n \mapsto \rt{t}_n \}| = 2^{n-1-\gimel(\rt{t}_n)} \enspace .
$$
Tajima \cite{Tajima1983} shows that there are $2^{n-1-\gimel(\rt{t}_n)}$ ranked planar trees for a given ranked (non-planar) tree $\rt{t}_n$.  
For an intuitive justification of Tajima's result, suppose we want to turn the ranked tree $\rt{t}_n$ into a ranked planar tree.  
Then, for each of the $n-1$ internal nodes of $\rt{t}_n$, there are two choices for the child node that is said to be `left' except if they are both leaf nodes that carry no ranks (i.e., the internal node is a cherry node).  
Thus, all internal nodes except the cherry nodes (a total of $n-1-\gimel(\rt{t}_n)$ nodes) give two possible orderings for their child nodes with ranks.

\subsection{Plane Trees}

Recall the $n$-th Catalan number \cite{Catalan2005}: 
\[
C_n = \frac{1}{n+1}\binom{2n}{n} = \frac{(2n)!}{(n+1)!n!} = \prod_{k=2}^n \frac{n+k}{k} \enspace.
\]
The number of plane trees with $n-1$ internal nodes and $n$ leaf nodes is given by $C_{n-1}$, 
i.e., $|\pT_n| = C_{n-1}$.  
Recall that the number of ranked plane trees with $n$ leaf nodes is $|\rpT_n| = (n-1)!$, 
and this is greater than the number of plane trees, i.e., $|\rpT_n| > |\pT_n|$ for any $n>2$.  
Thus, if one ignores the ranks at the internal nodes of ranked plane trees and considers them only as (unranked) 
plane trees then by the pigeon-hole principle there may be more than one ranked plane tree that corresponds to a plane tree.  
This combinatorics has to be accounted for when obtaining the distribution on plane trees from that over 
ranked plane trees.  
Thus, $\pT_n$, the set of {\em plane trees} with $n$ leaf nodes and without any internal node labels or ranks 
is an equivalence class of $\rpT_n$.  
The next Lemma gives the needed counting argument.
We suppress sub-scripting trees by the number of leaves for simplicity.

\begin{lemma}\label{L:CatCoeff}
Let $\pt{t}$ be a plane tree with $n$ leaf nodes and $n-1$ internal nodes, 
$\iV(\pt{t}):=\{v \in V: deg(v)>1\}$ be the set of internal nodes of $\pt{t}$, 
$\lfloor \pt{t} \rfloor := |\iV(\pt{t})|$ be the number of internal nodes of $\pt{t}$, 
and $\pt{t}(u)$ be the subtree of $\pt{t}$ with root node $u$.  
Then the {\em Catalan coefficient} \cite{CatalanCoeff2012} of $\pt{t}$, that gives the number of 
ranked plane trees in $\rpT_n$ corresponding to the plane tree $\pt{t} \in \pT_n$, is:
\begin{equation}\label{E:CatCoeff}
B(\pt{t}) = \frac{(n-1)!}{\prod\limits_{u \in \iV(\pt{t})}{\lfloor \pt{t}(u)\rfloor}} 
= {\small \frac{\text{$(\#$ of internal nodes of $\pt{t})!$}}
{\prod\limits_{u \in \iV(\pt{t})}\left(\text{$\#$ of internal nodes of $\pt{t}(u)$}\right)}
}\enspace .
\end{equation}
\begin{proof}
Let $L(\pt{t})$ and $R(\pt{t})$ be left and right subtrees of $\pt{t}$.  
Then the number of distinct binary inter-leavings between the interior (or split) nodes of $L(\pt{t}_n)$ and $R(\pt{t}_n)$ is:
\begin{eqnarray*}
\binom{\lfloor L(\pt{t}) \rfloor + \lfloor R(\pt{t}) \rfloor}{\lfloor L(\pt{t}) \rfloor} 
&=& \frac{\left(\lfloor L(\pt{t}) \rfloor + \lfloor R(\pt{t}) \rfloor \right)!}{ \lfloor L(\pt{t}) \rfloor ! \times \lfloor R(\pt{t}) \rfloor!} 
= \frac{\lfloor \pt{t} \rfloor \times \left(\lfloor L(\pt{t})\rfloor +\lfloor R(\pt{t})\rfloor \right)!}{\lfloor \pt{t}\rfloor \times \lfloor L(\pt{t})\rfloor ! \times \lfloor R(\pt{t})\rfloor !}\\
&=& \frac{\lfloor \pt{t}\rfloor !}{\lfloor \pt{t} \rfloor \times \lfloor L(\pt{t}) \rfloor ! \times \lfloor R(\pt{t}_n)\rfloor !}\enspace .
\end{eqnarray*}
And the number of distinct binary inter-leavings between the interior nodes of $L(\pt{t})$ and $R(\pt{t})$ as well as their subtrees and their sub-subtrees and so on gives the Catalan coefficient by the following recursion with cancellations: 
\begin{eqnarray*}
B(\pt{t}) 
&=& \frac{\lfloor \pt{t} \rfloor !}{\lfloor \pt{t} \rfloor \times \lfloor L(\pt{t}) \rfloor ! \times \lfloor R(\pt{t})\rfloor !} \times B(L(\pt{t})) \times B(R(\pt{t})) \\
&=& \frac{\lfloor \pt{t}|!}{\lfloor \pt{t} \rfloor \times \lfloor L(\pt{t}) \rfloor \times \lfloor R(\pt{t}) \rfloor \times \lfloor L(L(\pt{t})) \rfloor \times \lfloor R(L(\pt{t})) \rfloor \times \cdots \times 1}\\
&=& \frac{\lfloor \pt{t}\rfloor !}{\prod\limits_{v \in \pt{t}} \lfloor\pt{t}(v)\rfloor}  
= \frac{(n-1)!}{\prod\limits_{v \in \iV(\pt{t})} \lfloor\pt{t}(v)\rfloor} \enspace .
\end{eqnarray*}
\end{proof}
\end{lemma}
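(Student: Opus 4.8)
The plan is to count the number of ranked plane trees that refine a given plane tree $\pt{t}$ directly, by recursively building the internal-ranking of $\pt{t}$ and keeping track of how many valid ways there are to do so. Recall that a ranked plane tree over the plane tree $\pt{t}$ is just an assignment of the ranks $1,2,\ldots,n-1$ to the $n-1$ internal nodes of $\pt{t}$ that respects the ancestor constraint: an ancestor internal node always has smaller rank than a descendant internal node. So $B(\pt{t})$ is precisely the number of linear extensions of the partial order that $\pt{t}$ induces on $\iV(\pt{t})$, where the root of $\pt{t}$ is the unique minimum and the order is a forest order (each internal node's predecessors form a chain up to the root). The key structural observation is that such a partial order on a rooted tree decomposes at the root: the root must receive rank $1$, and then the remaining $\lfloor L(\pt{t})\rfloor + \lfloor R(\pt{t})\rfloor = \lfloor \pt{t}\rfloor - 1$ ranks must be distributed to the internal nodes of $L(\pt{t})$ and $R(\pt{t})$ as two interleaved increasing sequences.

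First I would set up the recursion. Choosing which $\lfloor L(\pt{t})\rfloor$ of the available $\lfloor \pt{t}\rfloor - 1$ ranks go to the left subtree (the rest going to the right) can be done in $\binom{\lfloor L(\pt{t})\rfloor + \lfloor R(\pt{t})\rfloor}{\lfloor L(\pt{t})\rfloor}$ ways, and each choice is compatible with the root getting rank $1$; having fixed the two sets of ranks, the number of order-preserving relabelings within each subtree is $B(L(\pt{t}))$ and $B(R(\pt{t}))$ respectively, since relabeling by an order isomorphism does not change the count. This yields
\[
B(\pt{t}) = \binom{\lfloor L(\pt{t})\rfloor + \lfloor R(\pt{t})\rfloor}{\lfloor L(\pt{t})\rfloor}\, B(L(\pt{t}))\, B(R(\pt{t})),
\]
with base case $B(\pt{t}) = 1$ when $\pt{t}$ is a single leaf (the empty product). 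Rewriting the binomial coefficient as $\frac{\lfloor \pt{t}\rfloor!}{\lfloor \pt{t}\rfloor \cdot \lfloor L(\pt{t})\rfloor! \cdot \lfloor R(\pt{t})\rfloor!}$, using $\lfloor \pt{t}\rfloor = \lfloor L(\pt{t})\rfloor + \lfloor R(\pt{t})\rfloor + 1$ — this is exactly the computation already displayed in the excerpt.

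Next I would unfold the recursion by induction on $\lfloor \pt{t}\rfloor$. Substituting the inductive formula $B(L(\pt{t})) = \lfloor L(\pt{t})\rfloor! / \prod_{u \in \iV(L(\pt{t}))}\lfloor \pt{t}(u)\rfloor$ and similarly for $R(\pt{t})$, the factorials $\lfloor L(\pt{t})\rfloor!$ and $\lfloor R(\pt{t})\rfloor!$ in the numerators cancel against those in the denominator of the binomial coefficient, leaving $\lfloor \pt{t}\rfloor! / \bigl(\lfloor \pt{t}\rfloor \cdot \prod_{u \in \iV(L(\pt{t}))}\lfloor \pt{t}(u)\rfloor \cdot \prod_{u \in \iV(R(\pt{t}))}\lfloor \pt{t}(u)\rfloor\bigr)$. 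Since $\iV(\pt{t})$ is the disjoint union of $\{\text{root}\}$, $\iV(L(\pt{t}))$ and $\iV(R(\pt{t}))$, and the root contributes the factor $\lfloor \pt{t}(\text{root})\rfloor = \lfloor \pt{t}\rfloor$, the denominator is exactly $\prod_{u \in \iV(\pt{t})}\lfloor \pt{t}(u)\rfloor$, giving \eqref{E:CatCoeff} with $\lfloor \pt{t}\rfloor = n-1$.

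The only real subtlety — and the step I would be most careful about — is justifying that relabeling the ranks within a subtree by an increasing bijection does not change the count of valid internal-rankings, i.e., that $B$ really depends only on the shape of the subtree and not on which particular label set is used; this is what legitimizes multiplying $B(L(\pt{t}))$ by $B(R(\pt{t}))$ after the interleaving choice. This is immediate once one phrases $B(\pt{t})$ as the number of linear extensions of the induced forest poset, which is invariant under order isomorphism of the ground set, so I would make that reformulation explicit at the outset. Everything else is the bookkeeping already sketched in the displayed computation; I would simply add the inductive framing and the remark identifying the recursion's base case and the disjoint-union decomposition of $\iV(\pt{t})$.
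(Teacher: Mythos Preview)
Your proposal is correct and follows essentially the same approach as the paper: both set up the recursion $B(\pt{t}) = \binom{\lfloor L(\pt{t})\rfloor + \lfloor R(\pt{t})\rfloor}{\lfloor L(\pt{t})\rfloor}\, B(L(\pt{t}))\, B(R(\pt{t}))$ via the interleaving count and then unfold it to obtain the product formula. Your version is somewhat more explicit in framing $B(\pt{t})$ as a linear-extension count and in stating the induction and base case, but the underlying argument is the same.
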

\begin{remark}
Lemma~\ref{L:CatCoeff} can be proved using poset theoretic ideas as in \cite[Ch.~3, Ex.~1.b., p.~312]{Stanley1997}.  
The proof given above closely follows that of \cite[Cor.~4.1]{Dobrow1995} where $B(\pt{t}_n)$ is called the {\em shape functional} of the planar tree $\pt{t}_n$ in the context of uniform permutations on binary search trees.  
Observe that if a rooted binary phylogenetic tree $\tau$ with $n$ leaf nodes (but with the leaf labels ignored) 
is viewed as a plane tree $\pt{t}$ then the Catalan coefficient of $\pt{t}$ is identical to the number of rankings 
of $\tau$ \cite[Prop.~2.3.2]{Steel2003}.  
\end{remark}

%

Thus, Lemma~\ref{L:CatCoeff} consolidates \cite[Cor.~4.1]{Dobrow1995}, \cite[Ch.~3, Ex.~1.b., p.~312]{Stanley1997} 
and \cite[Prop.~2.3.2]{Steel2003}.  
Our nomenclature is motivated by our need of Catalan coefficients to obtain probabilities on tree spaces with 
up to $n$ leaves akin to how binomial coefficients are needed to obtain probabilities on $\{0,1,2\ldots,n\}$.  
To fix ideas we consider an example and some visualizations of the Catalan coefficients next.

\begin{example}
We can compute the Catalan coefficient of the perfectly balanced plane tree with $k=7$ splits and $8$ leaves (all with depth $3$) using \eqref{E:CatCoeff}, as follows:
\[
B \left(\mathrel{\resizebox{0.1\textwidth}{!}{\begin{tikzpicture}[baseline=(current bounding box.west),level distance=4mm] 
  \tikzstyle{level 1}=[sibling distance=8mm] 
  \tikzstyle{level 2}=[sibling distance=4mm] 
  \tikzstyle{level 3}=[sibling distance=2mm] 
  \coordinate
     child { 
       child {child child}
       child {child child} 
     } 
     child { 
       child {child child} 
       child {child child} 
     };
\end{tikzpicture}
}}\right)
= \frac{7!}{7 \times 3 \times 3 \times 1 \times 1 \times 1 \times 1} = \frac{\overset{2}{\cancel{6}} \times 5 \times 4 \times \cancel{3} \times 2}{\cancel{3} \times \cancel{3}} = 80 \enspace .
\]
This appears at frequency $1$ in the third row of Figure~\ref{F:CatCoeff345Freqs678}.  Similarly,
\[
B \left(\mathrel{\resizebox{0.1\textwidth}{!}{\begin{tikzpicture}[baseline=(current bounding box.west),level distance=4mm] 
  \tikzstyle{level 1}=[sibling distance=8mm] 
  \tikzstyle{level 2}=[sibling distance=4mm] 
  \tikzstyle{level 3}=[sibling distance=2mm] 
  \coordinate
     child { 
       child {child child}
       child 
     } 
     child { 
       child 
       child {child child} 
     };
\end{tikzpicture}
}}\right)
= \frac{5!}{5 \times 2 \times 2 \times 1 \times 1} = \frac{\cancel{5} \times \cancel{4} \times {3} \times 2}{\cancel{5} \times \cancel{2} \times \cancel{2}} = 6 \enspace .
\]
Four of the $42$ plane trees with five splits have Catalan coefficient of $6$ as shown in the third stem plot of the top row of Figure~\ref{F:CatCoeff345Freqs678}.
\end{example}

\begin{figure}[htbp]
\begin{center}
\includegraphics[width=0.8\textwidth]{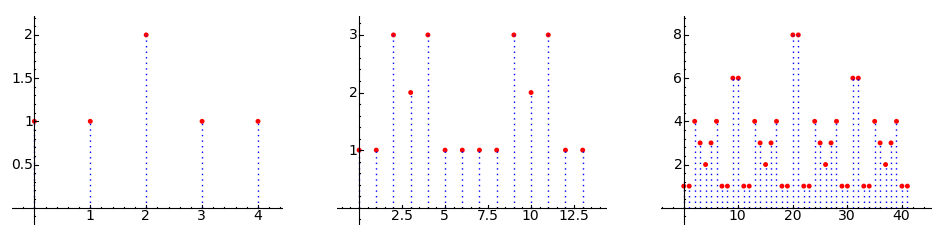}\\
\includegraphics[width=0.8\textwidth]{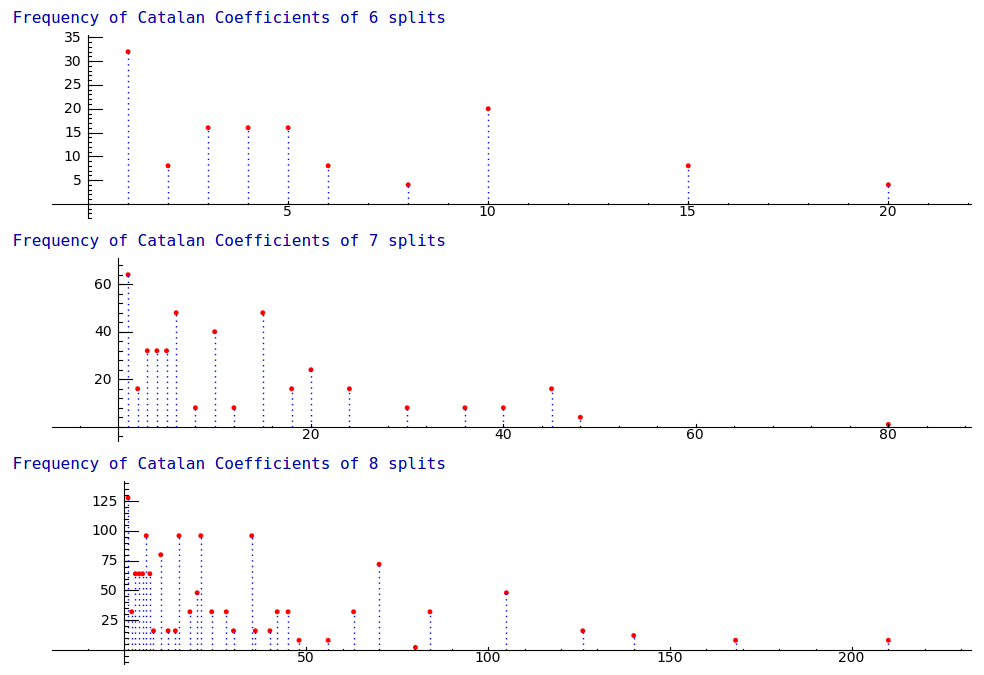}
\end{center}
\caption{Catalan coefficients of plane trees with $3,4,5$ splits (top row) and frequency of Catalan coefficients of plane trees with $6,7,8$ splits.\label{F:CatCoeff345Freqs678}}
\end{figure}

The smallest value of the Catalan coefficient is $1$ (there is only one path leading to a fully left-only branching unbalanced planar tree, for example).  
Thus, $\min (B(\pt{t}_n))=1$.  
Moreover, the number of planar trees with $n-1$ splits that correspond to a non-planar fully unbalanced tree with Catalan coefficient equal to $1$ is $2^{n-2}$.  
This is because, out of the $n-1$ splits in the fully unbalanced tree there are $n-2$ possibilities for planarity (since there are $n-2$ non-cherry internal nodes).  
At the other extreme, the maximum possible value for the Catalan coefficient over trees with $n$ leaves is given by the number of heaps of $n$ elements \cite{MaxHeaps2007}:
\[
1, 1, 1, 2, 3, 8, 20, 80, 210, 896, 3360, 19200, 79200, \ldots \enspace .
\]

Let $Q(\pt{t}_n)=B(\pt{t}_n)/(n-1)!$ be a measure of the balance of the planar tree.  
Clearly, more balanced trees will have a higher value of $Q$ compared to less balanced trees.  
Fill \cite{Fill1996} shows that 
$$-\frac{\ln Q(\pt{t}_n)}{n} \overset{P}{\longrightarrow} \sum_{j=2}^{\infty} 4^{-j}C_j \ln(j) \approxeq 2.03$$ 
if $\pt{t}_n$ is uniformly distributed on $\pT_n$ with $\Pr(\pt{t}_n)=1/C_{n-1}$, and 
$$-\frac{\ln Q(\pt{t}_n)}{n} \overset{P}{\longrightarrow} 2\sum_{k=2}^{\infty}\frac{\ln(k)}{(k+1)(k+2)} \approxeq 1.204$$ 
if the probability of $\pt{t}_n \in \pT_n$ is $Q(\pt{t}_n)$, i.e., induced by the uniform distribution on $\rpT_n$ with $\Pr(\rpt{t}_n)=1/(n-1)!$.  
A limiting Normal law is established in \cite[Thm.~4.1]{Fill1996} for $-\ln(Q(\pt{t}_n))/n$ when $\Pr(\pt{t}_n)=Q(\pt{t}_n)$. 


\subsection{Trees}

Trees in this work are finite rooted and binary without node labels as stated in the Introduction.  
Such trees are called {\em unlabeled non-ranked binary dendrograms} in \cite[Section~5]{Murtagh1984} 
and as {\em rooted binary tree shapes} in \cite[Section~2.4]{Steel2003}.  
Let $\T_n$ denote the set of such trees with $n$ leaf nodes and $n-1$ internal nodes.
The number of such trees as a function of $n$ is given by Wedderburn-Etherington numbers \cite{A001190}:
\[
1, 1, 1, 2, 3, 6, 11, 23, 46, 98, 207, 451, 983, 2179, 4850, 10905, 24631, 56011, \ldots \enspace .
\]

If we generate ranked plane trees in $\rpT_n$ according to Construction~\ref{R:PlaneBinaryTree} 
and ignore the ranks then we can obtain plane trees in $\pT_n$.  
We can further ignore the planarity of trees in $\pT_n$ to obtain trees in $\T_n$.  
From \eqref{E:CatCoeff} we know the number of elements in $\rpT_n$ that map to a given plane tree $\pt{t} \in \pT_n$ when we ignore the ranks.  
Next we find the number of plane trees in $\pT_n$ that map to a given tree $t \in \T_n$.

Recall that $\iV(t)$ is the set of internal nodes of $t$ and $\lfloor t \rfloor :=|\iV(t)|$.  
For each internal node $v$ of $\iV_{t}$, denote by $L(t(v))$ and $R(t(v))$ 
the left and right subtrees of $t$ below $v$, i.e., with $v{\sf L}$ and $v{\sf R}$ as their roots, respectively. 
Let the set of symmetry nodes of $t$ be
$$
S(t) :=\{v\in \iV(t): L(t(v)) \text{ and } R(t(v)) \text{ are isomorphic }\} \text{ and } 
s(t):= |S(t)| \enspace.
$$  

\begin{lemma} Let $t=(V,E) \in \T_n$ with $n \geq 2$.  
Then, $t$ corresponds to $2^{n-1-s(t)}$ plane trees, i.e. 
\begin{equation}\label{E:ShapeCoeff} 
C(t) := |\{\pt{t} \in \pT_n : \pt{t} \mapsto t \}| = 2^{\lfloor t \rfloor -s(t)} = 2^{n-1-s(t)} \enspace .
\end{equation}

\begin{proof} The last equality in \eqref{E:ShapeCoeff} is merely due to the fact that a tree with $n$ leaves has $n-1$ internal nodes, i.e., if $t\in \T_n$ then $\lfloor t \rfloor=n-1$.  
We use induction on the number $n$ of leaves of $t$ to prove the main equality in \eqref{E:ShapeCoeff}.

{\em Initial case $n=2$}: There is only one tree $t$ with two leaves, namely the one consisting of the root node with two attached leaves. 
Clearly, the root is a symmetry node, so $s(t)=1$ and since the root is the only inner node of $t$ we get $\lfloor t \rfloor=1$.  
Also, there is only one plane tree $\pt{t}$ with two leaves.  
It consists of the root node $\rho$ and the two leaf nodes with addresses $\rho{\sf L}$ and $\rho {\sf R}$. 
So altogether, for the number of plane trees we have: $1=2^{\lfloor t \rfloor-s(t)}= 2^{1-1}$.  
This completes the proof for $n=2$.

{\em Inductive case $n\rightarrow n+1$:} 
Let $t$ have $n+1$ leaves and assume that the lemma is already proven for any natural 
number up to and including $n$.  
Let $\rho$ be the root of $t$.  
Let $\rho{\sf L}$ and $\rho{\sf R}$ denote the children of $\rho$ and let $L(t)$ and $R(t)$ denote the subtrees 
rooted at $\rho{\sf L}$ and $\rho{\sf R}$, respectively.  
Let $n_R$, $n_L$ denote the number of leaves in $L(t)$ and $R(t)$, respectively.  
Note that $n_L+n_R=n+1$. 
Moreover, we have $\lfloor t \rfloor = \lfloor L(t) \rfloor + \lfloor R(t) \rfloor + 1$ because of the root.  
As both $n_L$ and $n_R$ are smaller than $n+1$, we know by the inductive assumption that $L(t)$ corresponds to 
$2^{\lfloor L(t) \rfloor - s(L(t))}$ plane trees and $R(t)$ to $2^{\lfloor R(t) \rfloor - s(R(t))}$.  
Now if $L(t)$ and $R(t)$ are isomorphic, $\rho$ is a symmetry node of $t$.  
In this case, $s(t)=s(L(t))+s(R(t))+1$.  
Else, $s(t)=s(L(t))+s(R(t))$.  We consider the two cases next.

\noindent
{\em Symmetric sub-case:} If $\rho$ is a symmetry node, the number of plane trees induced by $t$ is 
just the number of such trees induced by $L(t)$ times that induced by $R(t)$, 
because swapping the left and the right subtrees would not lead to any more distinct plane trees.  
Therefore, we conclude that the number of plane trees induced by $t$ in this case is
\begin{eqnarray*}
2^{\lfloor L(t) \rfloor - s(L(t))} 2^{\lfloor R(t) \rfloor - s(R(t))} 
&=&2^{(\lfloor L(t) \rfloor+\lfloor R(t) \rfloor)-(s(L(t))+s(R(t)))}\\ 
&=&2^{(\lfloor t \rfloor-1)-(s(t)-1)}
=2^{\lfloor t \rfloor-s(t)}\enspace.
\end{eqnarray*}

\noindent
{\em Asymmetric sub-case:} If $\rho$ is not a symmetry node, the number of plane trees induced by $\tau$ is the 
number of such trees induced by $L(t)$ times that induced by $R(t)$ times $2$, 
because the roles of $L(t)$ and $R(t)$ can be swapped about the asymmetric node $\rho$ to obtain two distinct plane trees.  
Therefore, the number of plane trees induced by $t$ in this case is 
\begin{eqnarray*}
2^{\lfloor L(t) \rfloor - s(L(t))} 2^{\lfloor R(t) \rfloor - s(R(t))}  2 
&=&2^{(\lfloor L(t) \rfloor+\lfloor R(t) \rfloor)-(s(L(t))+s(R(t)))+1}\\ 
&=&2^{(\lfloor t \rfloor-1)-s(t)+1}
=2^{\lfloor t \rfloor-s(t)}\enspace.
\end{eqnarray*}
This completes the proof.
\end{proof}
\end{lemma}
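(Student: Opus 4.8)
The plan is to prove the identity \eqref{E:ShapeCoeff} by induction on the number $n$ of leaves of $t$, splitting off the root. Since $\lfloor t\rfloor=n-1$ for every $t\in\T_n$, the last equality in \eqref{E:ShapeCoeff} is automatic, so it suffices to establish $C(t)=2^{\lfloor t\rfloor-s(t)}$. For the base cases: a tree with one leaf is a single node, with $\lfloor t\rfloor=s(t)=0$ and $C(t)=1=2^{0}$; and for $n=2$ the only tree is the cherry (a root with two leaf children), whose root is vacuously a symmetry node, so $s(t)=1=\lfloor t\rfloor$, and since the two leaves are unlabeled there is a unique plane tree lying above it, giving $C(t)=1=2^{1-1}$.

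For the inductive step I would assume the claim for all trees with at most $n$ leaves and take $t\in\T_{n+1}$ with root $\rho$ and pendant subtrees $L:=L(t(\rho))$ and $R:=R(t(\rho))$, having $n_L$ and $n_R$ leaves with $n_L+n_R=n+1$; then $1\le n_L,n_R\le n$, so the inductive hypothesis applies to both $L$ and $R$. The key observation is that a plane tree with at least two leaves is exactly an ordered pair $(\pt{t}_1,\pt{t}_2)$ consisting of a left and a right pendant plane subtree at its root, and such a pair lies above $t$ precisely when forgetting the plane structure turns $\pt{t}_1$ and $\pt{t}_2$ into the abstract trees $L$ and $R$ in some order. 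Together with $\lfloor t\rfloor=\lfloor L\rfloor+\lfloor R\rfloor+1$, this reduces everything to counting admissible ordered pairs, which splits into two cases according to whether $\rho\in S(t)$.

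In the \emph{asymmetric} case $L\not\cong R$ we have $\rho\notin S(t)$, hence $s(t)=s(L)+s(R)$; an admissible pair either has $\pt{t}_1$ forgetting to $L$ and $\pt{t}_2$ to $R$, or vice versa, and since $L\not\cong R$ these two families are disjoint, so $C(t)=2\,C(L)\,C(R)$. In the \emph{symmetric} case $L\cong R$ we have $\rho\in S(t)$, hence $s(t)=s(L)+s(R)+1$; now each of $\pt{t}_1$ and $\pt{t}_2$ must forget to the common abstract shape, and these two choices are independent, so $C(t)=C(L)\,C(R)$ with no extra factor of $2$. Substituting $C(L)=2^{\lfloor L\rfloor-s(L)}$ and $C(R)=2^{\lfloor R\rfloor-s(R)}$ from the inductive hypothesis, in each case the $\pm1$'s cancel and deliver $C(t)=2^{\lfloor t\rfloor-s(t)}$, which closes the induction.

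I expect the only real obstacle to be treating the symmetric sub-case correctly: one must observe that although the set of admissible ordered pairs $(\pt{t}_1,\pt{t}_2)$ is closed under exchanging its two coordinates, distinct plane trees still correspond to distinct \emph{ordered} pairs, so the product $C(L)\,C(R)$ already counts each plane tree above $t$ exactly once and there is no doubling --- and this single lost power of $2$ is precisely what the extra symmetry node $\rho$ contributes to $s(t)$. Once that is clear, aligning the book-keeping of $s(\cdot)$ and $\lfloor\cdot\rfloor$ across the two cases is the entire remaining content. An alternative route I would keep in reserve is to realize the plane trees above $t$ as the orbits of the $2^{\lfloor t\rfloor}$ left/right labelings of the internal nodes under $\mathrm{Aut}(t)$, check that the action is free, and invoke $|\mathrm{Aut}(t)|=2^{s(t)}$; but the direct induction above is more self-contained.
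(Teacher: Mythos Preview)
Your proposal is correct and follows essentially the same route as the paper: induction on $n$, decomposition at the root into $L$ and $R$, and a case split on whether $\rho$ is a symmetry node, with the identical algebra in both sub-cases. The only notable difference is that you explicitly include the $n=1$ base case (a single leaf), which the paper omits but which is actually needed since in the inductive step one of the pendant subtrees may consist of a single leaf; so your version is slightly more careful on this point.
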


More than one plane tree in $\pT_n$ may map to a given tree $t \in \T_n$ when we ignore planarity. 
However, $B(\pt{t})$, the Catalan coefficient of any plane tree  
$\pt{t}$ in $\{ \pt{t} \in \pT_n: \pt{t} \mapsto t \}$, that maps to any given tree $t \in \T_n$, is identical.  
This is because $B(\pt{t})$ in \eqref{E:CatCoeff} only depends on $\prod_{v\in \iV(\pt{t})} \lfloor \pt{t}(v) \rfloor$, 
the product of the number of internal nodes in each subtree with an internal node in $\pt{t}$ as its root, 
a quantity that is preserved when planarity is ignored.  Thus,
\[
B(\pt{t}_n) = B(t_n) = \frac{(n-1)!}{\prod_{v \in \iV} \lfloor t_n(v) \rfloor} \enspace .  
\]
This leads to the next lemma.
\begin{lemma}
The number of ranked planar trees that map to a tree $t_n \in \T_n$ is:
\begin{equation}\label{E:NumRptPerT}
|\{\rpt{t}_n \in \rpT_n : \rpt{t}_n \mapsto t_n\}| = B(t_n) 2^{n-1-s(t_n)}\enspace .
\end{equation}
\end{lemma}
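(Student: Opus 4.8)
The plan is to derive \eqref{E:NumRptPerT} by composing the two fibrations already analyzed: the rank-forgetting map $\rpT_n \to \pT_n$ (Lemma~\ref{L:CatCoeff}) and the planarity-forgetting map $\pT_n \to \T_n$ \eqref{E:ShapeCoeff}. First I would note that every $\rpt{t}_n \in \rpT_n$ has a unique image $\pt{t} \in \pT_n$ obtained by erasing the ranks, and that $\pt{t}$ in turn has a unique image $t_n \in \T_n$ obtained by erasing the planar embedding; consequently the fiber $\{\rpt{t}_n \in \rpT_n : \rpt{t}_n \mapsto t_n\}$ is the disjoint union, over the plane trees $\pt{t} \in \pT_n$ with $\pt{t} \mapsto t_n$, of the fibers $\{\rpt{t}_n : \rpt{t}_n \mapsto \pt{t}\}$. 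By Lemma~\ref{L:CatCoeff} each such inner fiber has exactly $B(\pt{t})$ elements, so
\[
|\{\rpt{t}_n \in \rpT_n : \rpt{t}_n \mapsto t_n\}| \;=\; \sum_{\pt{t}\,\mapsto\, t_n} B(\pt{t}),
\]
the sum ranging over all $\pt{t}\in\pT_n$ mapping to $t_n$.

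Next I would invoke the observation recorded just above the statement, namely that $B(\pt{t})$ depends only on $\prod_{v \in \iV(\pt{t})} \lfloor \pt{t}(v)\rfloor$, a quantity that is unchanged when planarity is forgotten; hence $B(\pt{t}) = B(t_n)$ for every $\pt{t}$ in the fiber over $t_n$, and the displayed sum collapses to $B(t_n)$ times the number of its terms. That number of terms is precisely $C(t_n) = 2^{\lfloor t_n\rfloor - s(t_n)} = 2^{n-1-s(t_n)}$ by the preceding lemma \eqref{E:ShapeCoeff}. Combining the two counts yields $B(t_n)\,2^{n-1-s(t_n)}$, which is \eqref{E:NumRptPerT}.

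I do not anticipate a genuine obstacle, since essentially all the work has already been done in the two earlier lemmas; the only point needing a line of justification is the well-definedness of $B(t_n)$, i.e.\ that an isomorphism of (non-plane) trees induces a size-preserving bijection of their internal nodes, so that the product $\prod_{v} \lfloor \pt{t}(v)\rfloor$ is genuinely a tree-shape invariant and not merely a plane-tree invariant. Once that is in place the argument is pure bookkeeping. One could alternatively give a direct one-step proof — the $B(t_n)$ admissible internal rankings of a fixed plane representative and the $2^{n-1-s(t_n)}$ admissible planar embeddings are chosen independently and together list all ranked plane trees over $t_n$ — but routing through $\pT_n$ keeps the proof short and reuses exactly what \eqref{E:CatCoeff} and \eqref{E:ShapeCoeff} already supply.
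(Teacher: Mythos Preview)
Your proof is correct and follows exactly the route the paper takes: the paper states (just before the lemma) that $B(\pt{t})$ depends only on the underlying tree shape, so $B(\pt{t})=B(t_n)$ for every plane representative, and then the lemma is presented as the immediate consequence of multiplying this constant fiber size by $C(t_n)=2^{n-1-s(t_n)}$ from \eqref{E:ShapeCoeff}. Your write-up is simply a more explicit version of the same two-step composition through $\pT_n$.
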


\begin{lemma}
The number of ranked trees in $\rT_n$ corresponding to a given tree $t_n \in \T_n$ is:
\begin{equation}\label{E:NumRtPerT}
|\{\rt{t}_n \in \rT_n : \rt{t}_n \mapsto t_n \}| = \frac{(n-1)!}{\prod\limits_{u \in \iV(\pt{t}_n)}{\lfloor \pt{t}_n(u)\rfloor}} 2^{\gimel(t_n)-s(t_n)}\enspace .
\end{equation}
\end{lemma}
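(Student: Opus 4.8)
The plan is to count the fiber of the composite forgetful map $\rpT_n \to \T_n$ over a fixed tree $t_n$ in two different ways, passing through $\rT_n$ on one side. Observe first that the forgetful maps fit into a commuting square: forgetting ranks and then planarity, or forgetting planarity and then ranks, both send a ranked plane tree $\rpt{t}_n$ to the same $t_n \in \T_n$, i.e.\ the composites $\rpT_n \to \rT_n \to \T_n$ and $\rpT_n \to \pT_n \to \T_n$ agree. Fix $t_n \in \T_n$ and set $F_{t_n} := \{\rpt{t}_n \in \rpT_n : \rpt{t}_n \mapsto t_n\}$, the set of ranked plane trees whose underlying tree shape is $t_n$. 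By \eqref{E:NumRptPerT} we already know $|F_{t_n}| = B(t_n)\, 2^{n-1-s(t_n)}$.

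Next I would slice $F_{t_n}$ along the map $\rpT_n \to \rT_n$ that forgets plane structure. Since this is a well-defined function on ranked plane trees, the preimages $\{\rpt{t}_n \in \rpT_n : \rpt{t}_n \mapsto \rt{t}_n\}$ of distinct ranked trees $\rt{t}_n$ are pairwise disjoint; and by commutativity of the square, every element of $F_{t_n}$ lies in exactly one such preimage, indexed by a ranked tree $\rt{t}_n$ whose shape is $t_n$. Hence $F_{t_n}$ is the disjoint union, over $\{\rt{t}_n \in \rT_n : \rt{t}_n \mapsto t_n\}$, of these preimages. By Tajima's result (the identity $|\{\rpt{t}_n \in \rpT_n : \rpt{t}_n \mapsto \rt{t}_n\}| = 2^{n-1-\gimel(\rt{t}_n)}$ established above), each preimage has size $2^{n-1-\gimel(\rt{t}_n)}$. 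The point that makes the count uniform is that the number of cherry nodes depends only on the tree shape, so $\gimel(\rt{t}_n) = \gimel(t_n)$ for every $\rt{t}_n$ appearing in the union; thus all the preimages share the common cardinality $2^{n-1-\gimel(t_n)}$.

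Finally I would divide: $|\{\rt{t}_n \in \rT_n : \rt{t}_n \mapsto t_n\}| = |F_{t_n}| \big/ 2^{n-1-\gimel(t_n)} = B(t_n)\, 2^{n-1-s(t_n)} \big/ 2^{n-1-\gimel(t_n)} = B(t_n)\, 2^{\gimel(t_n)-s(t_n)}$, and substituting the closed form $B(t_n) = (n-1)! / \prod_{u\in\iV(\pt{t}_n)}\lfloor\pt{t}_n(u)\rfloor$ from \eqref{E:CatCoeff} yields \eqref{E:NumRtPerT}. There is no genuine obstacle here beyond bookkeeping: the two points needing care are the disjointness and exhaustiveness of the decomposition of $F_{t_n}$, and the shape-invariance of $\gimel$. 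One may also record, as a sanity check, that every cherry node is automatically a symmetry node, so $s(t_n) \geq \gimel(t_n)$ and the exponent $\gimel(t_n)-s(t_n)$ is non-positive, even though $B(t_n)\,2^{\gimel(t_n)-s(t_n)}$ is of course a positive integer; this can be verified directly on the two shapes in $\T_4$ against Figure~\ref{F:SplitingRecursivelyThrice}(B).
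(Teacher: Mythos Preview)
Your proof is correct and follows essentially the same double-counting route as the paper: compute $|\{\rpt{t}_n : \rpt{t}_n \mapsto t_n\}|$ via \eqref{E:NumRptPerT}, then factor it through $\rT_n$ using Tajima's fiber count $2^{n-1-\gimel(\rt{t}_n)}$ and divide. You are in fact more careful than the paper in making explicit the two points the argument rests on---commutativity of the forgetful square and the shape-invariance of $\gimel$---which the paper bundles into a single line about ``invariance of $\gimel$, $s$, $B$ and $C$ to the equivalence classes''.
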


\begin{proof}
Recall that a cherry node is an internal node that has two leaf nodes as its children.
If $\gimel(\rt{t}_n)$ be the number of cherry nodes of $\rt{t}_n$ then
$$
|\{\rpt{t}_n \in \rpT_n : \rpt{t}_n \mapsto \rt{t}_n \}| = 2^{n-1-\gimel(\rt{t}_n)}\enspace .
$$
Also, we know the following two facts:
$$
|\{\rpt{t}_n \in \rpT_n : \rpt{t}_n \mapsto \pt{t}_n \}| = 
B(\pt{t}_n) = \frac{(n-1)!}{\prod\limits_{u \in \iV(\pt{t}_n)}{\lfloor \pt{t}_n(u)\rfloor}} 
$$
and
$$
C(t_n) := |\{\pt{t}_n \in \pT_n : \pt{t}_n \mapsto t_n \}| = 2^{\lfloor t_n \rfloor -s(t_n)} = 2^{n-1-s(t_n)} \enspace ,
$$
where $s(t_n)$ is the size of the set of symmetry nodes of $t_n \in \T_n$:
$$
S(t_n) :=\{v\in \iV(t_n): L(t_n(v)) \text{ and } R(t_n(v)) \text{ are isomorphic }\} \text{ and } 
s(t_n):= |S(t_n)| \enspace.
$$
Due to the invariance of $\gimel$, $s$, $B$ and $C$ to the equivalence classes in $\rpT_n$, $\pT_n$, $\rT_n$ and $\T_n$, we obtain:
\[
|\{\rpt{t}_n \in \rpT_n : \rpt{t}_n \mapsto t_n \}| = B(t_n) C(t_n)  = \frac{(n-1)!}{\prod\limits_{u \in \iV(t_n)}{\lfloor t_n(u)\rfloor}} 2^{n-1-s(t_n)}\enspace .
\]
Finally, we obtain:
\begin{align*}
|\{\rpt{t}_n \in \rpT_n : \rpt{t}_n \mapsto t_n \}| 
&= |\{\rpt{t}_n \in \rpT_n : \rpt{t}_n \mapsto \rt{t}_n \}| \times |\{\rt{t}_n \in \rT_n : \rt{t}_n \mapsto t_n \}|\\
\frac{(n-1)!}{\prod\limits_{u \in \iV(t_n)}{\lfloor t_n(u)\rfloor}} 2^{n-1-s(t_n)} &= 2^{n-1-\gimel(\rt{t}_n)} \times |\{\rt{t}_n \in \rT_n : \rt{t}_n \mapsto t_n \}|\enspace .
\end{align*}
Thus
\[
|\{\rt{t}_n \in \rT_n : \rt{t}_n \mapsto t_n \}| = \frac{(n-1)!}{\prod\limits_{u \in \iV(\pt{t}_n)}{\lfloor \pt{t}_n(u)\rfloor}} 2^{\gimel(t_n)-s(t_n)}\enspace .
\]
\end{proof}

\section{Other Tree Constructions}\label{S:OtherCons}

There are a few representations of the state space for the probabilistic construction of trees.  
The different representations allow different classes of distributions to be defined easily on ranked plane trees.

We can turn any randomized algorithm that generates permutations on $[n-1]$ into one that generates ranked planar trees in $\rpT_{n}$ by simply going from $\sigma \mapsto \rpt{t}$ using Construction~\ref{R:TreeLifting}.  
A simple way to generate random permutations is through the {\em sampling without replacement scheme}, where you start with $n$ balls labelled $1,2\ldots,n$ from an urn, picking one by one, uniformly at random, noting its label and setting it outside the urn in a row.  
Another simple way is through a {\em Knuth Shuffle}, where you start with any permutation (say, the identity permutation), and then go through the positions $1$ through $n-1$, such that for each position $i$ swap the element currently at $i$ with a randomly chosen element from positions $i,i+1,\ldots,n$. 
Although these randomized algorithms over permutations can be transformed using Construction~\ref{R:TreeLifting} into randomized trees, they are not evolutionary as in Construction~\ref{R:PlaneBinaryTree} since the trees are not grown randomly in an incremental manner by splitting one of the existing leaves.

For an evolutionary and incremental construction over permutations, consider a recursive sampling scheme that inserts the $i$-th ball into one of the $i$ gaps between the $i-1$ balls that have been inserted up to step $i$.  
This is equivalent to splitting one of the current leaf nodes of the corresponding ranked plane tree.  
A natural construction of this idea using trees is described next.

A {\em binary search tree} is a rooted planar binary tree, whose internal nodes each store a key (say, a real number) and each internal node has left and right subtrees (see for e.g.~\cite{Mahmoud1992}).  
The tree additionally satisfies the {\em binary search tree property}, whereby the key in each node must be greater than all keys stored in the left subtree, and smaller than all keys stored in right subtree.  
The leaf nodes of the tree contain no key and are usually left unlabelled.  
We are interested in inserting a new key into the tree and growing the tree as summarized in Construction~\ref{R:InsertingRandPerm}.  

\begin{con}[Inserting Random Permutation into Binary Search Tree]\label{R:InsertingRandPerm} Suppose you are given $\sigma=\sigma_1\sigma_2\cdots\sigma_{n-1}$, a random permutation of $[n-1]$.  
First, insert the key $\sigma_1$ into the root node of the binary search tree, a planar tree in $\pT_n$.    
In order to insert the $i$-th node in the tree, its key $\sigma_i$ is first compared with that of the root node, i.e., with $\sigma_1$.  
If its key is less than that of the root, it is then compared with that of the root's left child node. 
If its key is greater than that of the root, it is then compared with that of the root's right child node.  
This process continues, until the new node to be inserted is compared with a sub-terminal node, and then it is added as this node's left or right child, depending on whether its key is greater than or less than the key of the sub-terminal node, respectively.  
\end{con}

For example, the planar tree with three internal nodes and four leaves that is grown by inserting $2,1,3$ is shown in Figure~\ref{F:randpermEx}.  
The first element $2$ is inserted into the root node.  
The second element $1$ is less than $2$ at the root, so it is inserted into the left child node to ensure the binary search tree property.  
Finally, the third element $3$ in the sequence is inserted into the right child node of the root since it is greater than $2$.  

\begin{figure}
{\fbox{\resizebox{0.6\textwidth}{!}{\begin{tikzpicture}[remember picture,
inner/.append style={},
outer/.append style={level distance=2cm, sibling distance=0.5cm}
]
\node[outer,draw=green,label={[xshift=0.4cm]{\Large $213$}},right=of T1233] (T2222) {
\begin{tikzpicture}[level distance=0.7cm,
level 1/.style={sibling distance=1.5cm},
level 2/.style={sibling distance=1cm},
level 3/.style={sibling distance=0.5cm}]
\tikzstyle{every node}=[circle,draw]
\node (Root)  {$2$}
    child {
    node {$1$} child {node [gray, fill]{} } child { node [gray, fill]{} } 
}
child {
    node {$3$} child {node [gray, fill]{} } child { node [gray, fill]{} } 
};
\end{tikzpicture}
};
\node[outer,draw=green,label={[xshift=-0.4cm]{\Large $21$}},left=of T2222,xshift=-1.5cm] (T122) {
\begin{tikzpicture}[level distance=0.7cm,
level 1/.style={sibling distance=1.5cm},
level 2/.style={sibling distance=1cm},
level 3/.style={sibling distance=0.5cm}]
\tikzstyle{every node}=[circle,draw]
\node (Root)  {$2$}
    child {
    node {$1$} child {node [gray, fill]{} } child { node [gray, fill]{} } 
}
child {
    node [gray, fill]{}
};
\end{tikzpicture}
};
\node[outer,draw=green,label={[xshift=-0.2cm]{\Large $2$}},left= of T122,xshift=-0.0cm] (T11) {
\begin{tikzpicture}[level distance=0.7cm,
level 1/.style={sibling distance=1.5cm},
level 2/.style={sibling distance=1cm},
level 3/.style={sibling distance=0.5cm}]
\tikzstyle{every node}=[circle,draw]
\node (Root)  {$2$}
    child {
    node [gray, fill]{}
}
child {
    node [gray, fill]{}
};
\end{tikzpicture}
};
\node[outer,draw=green,label={[xshift=-0.2cm]{}},left= of T11,xshift=-0.0cm] (T0) {
\begin{tikzpicture}[level distance=0.7cm,
level 1/.style={sibling distance=1.5cm},
level 2/.style={sibling distance=1cm},
level 3/.style={sibling distance=0.5cm}]
\tikzstyle{every node}=[circle,draw]
\node (Root)  [gray, fill]{};
\end{tikzpicture}
};
\draw[black,->] (T0.east) -- (T11.west);
\draw[black,->] (T11.east) -- (T122.west);
\draw[black,->] (T122.east) -- (T2222.west);
\end{tikzpicture}}}}
\caption{Binary search tree grown by inserting $2,1,3$.\label{F:randpermEx}}
\end{figure}
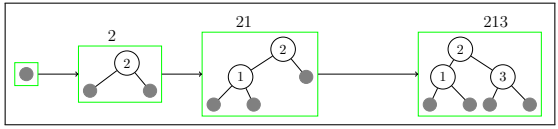



\begin{con}[Dyadic Partition Edges]
Let the set of dyadic fractions be 
$$\mathbb{X} := \{x = (0.b_1b_2\ldots b_{n_x})_2=\sum_{j=1}^{n_x}b_j2^{-j} : b_j \in \{0,1\}, 1 \leq j< n_{x}, b_{n_x}=1, n_x < \infty \} \enspace ,$$ 
where $(0.b_1b_2\ldots b_{n_x})_2$ is the finite binary expansion of the dyadic fraction $x=m/2^n$ with $m,n \in \mathbb{N}$.  
Let $\Mid(a,b)=(b+a)/2$ be the mid-point of $a$ and $b$.  
Let $\check{x}:= 2^{- n_x}$ be the smallest additive constituent of $x$ due to the terminal binary digit $b_{n_x}=1$. 

Our construction gives a sequence of $\mathbb{X}$-valued random variables $(X_0,X_1,\ldots,X_k)$, such that $X_{0}=0$, $X_{1}=1$ and for $k \geq 2$ we obtain $X_k$ from $(X_0,\dots,X_{k-1})$ from a randomly chosen index $I$ for bisecting or splitting as follows:
\[
X_{k} \gets \Mid\left(X_{(I)}, X_{(I+1)}\right), 
\]
where, $X_{(0:k)} := \left(X_{(0)},X_{(1)},\ldots,X_{(i)}, X_{(i+1)},\ldots,X_{(k)}\right)$ is the order statistics of the sequence $X_{0:k} := \left(X_{0},X_{1},\ldots,X_{i}, X_{i+1},\ldots,X_{k}\right)$.  
We also refer to $X_{(0:k)}$ as a partition since it represents the following partition of $[0,1]$ 
$$[X_{(0)}, X_{(1)}) \cup \cdots \cup [X_{(i)}, X_{(i+1)}) \cdots \cup [X_{(k-1)}, X_{(k)}] \enspace .$$
The general probabilistic {\em splitting rule} to obtain $X_k$ from the mid-point of a randomly chosen interval $(X_{(I)}, X_{(I+1)})$ of the partition $X_{(0:k-1)}$ generated by $X_{0:k-1}$ is given by the transition probability matrix $P$ with entries:
\begin{equation}\label{E:GenSplitRule}
P(x_{0:k-1},i) := \Pr\{I=i \, \mid \, X_{0:k-1}=x_{0:k-1}\},  \, i \in \{0,1,\ldots,k-2 \}, k \in {2,3,\ldots}
\end{equation}
\end{con}

\begin{con}[Dyadic Partition Depths] \label{depths}
We can equivalently represent $X_{(0:k)}$ by the width of the successive intervals partitioning $[0,1]$ as follows:
\begin{eqnarray*}
W(X_{(0:k)}) := 
W_{1:k} = \left(W_{1},\ldots, W_{k}\right) =
\left((X_{(1)}-X_{(0)}),\ldots, (X_{(k)}-X_{(k-1)})\right)
\end{eqnarray*}
Note that $\sum_{i=1}^k W_i=1$ and $0 < W_i \leq 1$ and therefore for each $k\geq 1$ we can think of $W_{1:k}=\left( W_1,\ldots,W_k\right)$ as a probability distribution over $k$ outcomes.  
It is convenient to denote $W_{1:k}$ in terms of integer sequences as follows:
\begin{eqnarray*}
Y(X_{(0:k)}) :=
Y_{1:k} = \left(Y_{1},\ldots, Y_{k}\right) =
\left(-\lg(X_{(1)}-X_{(0)}),\ldots, -\lg(X_{(k)}-X_{(k-1)})\right)
\end{eqnarray*}
This is called the depth encoding corresponding to the dyadic partition.  
We can obtain $Y_{1:k}$ from $Y_{1:k-1}$ by choosing $I$ at random according to Equation~\eqref{E:GenSplitRule} and replacing it by two consecutive entries that are deeper by $1$ as follows:
\begin{eqnarray*}
Y_{1} &\gets& Y(X_{(0:1)})=(-\lg(X_{(1)}-X_{(0)}))=(0) \\
Y_{1:k} &\gets& \left(Y_1,Y_2,\ldots,Y_{I}+1,Y_{I}+1, \ldots,Y_{k-1}\right) \enspace .
\end{eqnarray*}
\end{con}

Figure~\ref{Fig:TreeSeq1} depicts the three representations of the binary tree generation process under two splits and Figure~\ref{F:LfDpStateTrDiag4Splits} gives the state transition diagram of the process over dyadic partition depths.

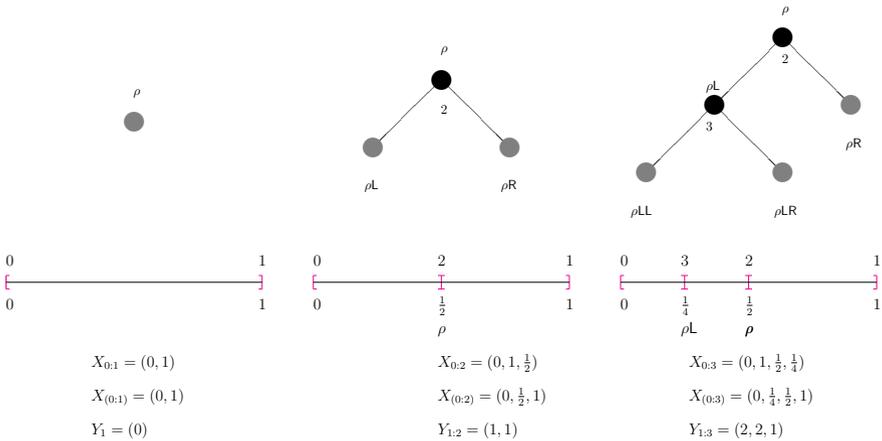
\begin{figure}[htbp]
\begin{center}
\scalebox{0.50}{
\setlength{\unitlength}{9cm}
\begin{picture}(5, 1.3)(0,0.26)
\put(0.375, 1.175){\color{gray}\circle*{0.08}}
\put(0.373,1.255){$\rho$}
\put(0,0.68){\color{magenta}\line(0,1){0.04}}  
\put(0,0.68){\color{magenta}\line(1,0){0.01}}  
\put(0,0.72){\color{magenta}\line(1,0){0.01}}  
\put(0,0.75){\Large{$0$}}
\put(0,0.62){\Large{$0$}}
\put(0.75,0.68){\color{magenta}\line(0,1){0.04}} 
\put(0.74,0.68){\color{magenta}\line(1,0){0.01}}  
\put(0.74,0.72){\color{magenta}\line(1,0){0.01}}  
\put(0.74,0.75){\Large{$1$}}
\put(0.74,0.62){\Large{$1$}}

\put(0,0.70){\line(1,0){0.75}} 
\put(0.25,0.45){\Large{$X_{0:1}=(0,1)$}}
\put(0.25,0.35){\Large{$X_{(0:1)}=(0,1)$}}
\put(0.25,0.25){\Large{$Y_{1}=(0)$}}
\put(1.275, 1.3){\line(-1,-1){0.2}}
\put(1.275, 1.3){\line(1,-1){0.2}}
\put(1.275, 1.3){\circle*{0.08}}
\put(1.273,1.38){$\rho$}
\put(1.273,1.2){$2$}
\put(1.075, 1.1){\color{gray}\circle*{0.08}}
\put(1.05,0.975){$\rho \sf{L}$}
\put(1.475, 1.1){\color{gray}\circle*{0.08}}
\put(1.45,0.975){$\rho \sf{R}$}

\put(0.9,0.68){\color{magenta}\line(0,1){0.04}}  
\put(0.9,0.68){\color{magenta}\line(1,0){0.01}}  
\put(0.9,0.72){\color{magenta}\line(1,0){0.01}}  
\put(0.9,0.75){\Large{$0$}}
\put(0.9,0.62){\Large{$0$}}
\put(1.65,0.68){\color{magenta}\line(0,1){0.04}} 
\put(1.64,0.68){\color{magenta}\line(1,0){0.01}}  
\put(1.64,0.72){\color{magenta}\line(1,0){0.01}}  
\put(1.64,0.75){\Large{$1$}}
\put(1.64,0.62){\Large{$1$}}

\put(1.275,0.68){\color{magenta}\line(0,1){0.04}}  
\put(1.265,0.68){\color{magenta}\line(1,0){0.02}}  
\put(1.265,0.72){\color{magenta}\line(1,0){0.02}}  
\put(1.265,0.75){\Large{$2$}}
\put(1.265,0.62){\Large{$\frac{1}{2}$}}
\put(1.265,0.55){\Large{$\rho$}}
\put(1.265,0.45){\Large{$X_{0:2}=(0,1,\frac{1}{2})$}}
\put(1.265,0.35){\Large{$X_{(0:2)}=(0,\frac{1}{2},1)$}}
\put(1.265,0.25){\Large{$Y_{1:2}=(1,1)$}}

\put(0.9,0.70){\line(1,0){0.75}} 

\put(2.275, 1.425){\circle*{0.08}} 
\put(2.273,1.5){$\rho$}
\put(2.273,1.35){$2$}
\put(2.275, 1.425){\line(-1,-1){0.2}}
\put(2.075, 1.225){\circle*{0.08}} 
\put(2.05,1.27){$\rho \sf{L}$}
\put(2.05,1.15){$3$}
\put(2.075, 1.225){\line(-1,-1){0.2}}
\put(1.875, 1.025){\color{gray}\circle*{0.08}} 
\put(1.83,0.9){$\rho \sf{LL}$}
\put(2.075, 1.225){\line(1,-1){0.2}}
\put(2.275, 1.025){\color{gray}\circle*{0.08}} 
\put(2.25,0.9){$\rho \sf{LR}$}
\put(2.275, 1.425){\line(1,-1){0.2}}
\put(2.475, 1.225){\color{gray}\circle*{0.08}} 
\put(2.46,1.1){$\rho \sf{R}$}

\put(1.8,0.68){\color{magenta}\line(0,1){0.04}}  
\put(1.8,0.68){\color{magenta}\line(1,0){0.01}}  
\put(1.8,0.72){\color{magenta}\line(1,0){0.01}}  
\put(2.55,0.68){\color{magenta}\line(0,1){0.04}} 
\put(2.54,0.68){\color{magenta}\line(1,0){0.01}}  
\put(2.54,0.72){\color{magenta}\line(1,0){0.01}}  
\put(2.54,0.75){\Large{$1$}}
\put(2.54,0.62){\Large{$1$}}

\put(2.175,0.68){\color{magenta}\line(0,1){0.04}}  
\put(2.165,0.68){\color{magenta}\line(1,0){0.02}}  
\put(2.165,0.72){\color{magenta}\line(1,0){0.02}}  
\put(2.165,0.75){\Large{$2$}}
\put(2.165,0.62){\Large{$\frac{1}{2}$}}
\put(2.165,0.55){\Large{$\rho$}}
\put(1.9875,0.68){\color{magenta}\line(0,1){0.04}}  
\put(1.977,0.68){\color{magenta}\line(1,0){0.02}}  
\put(1.977,0.72){\color{magenta}\line(1,0){0.02}}  
\put(1.977,0.75){\Large{$3$}}
\put(1.977,0.62){\Large{$\frac{1}{4}$}}
\put(1.977,0.55){\Large{${\rho \mathsf{L}}$}}

\put(1.8,0.70){\line(1,0){0.75}} 
\put(1.8,0.75){\Large{$0$}}
\put(1.8,0.62){\Large{$0$}}

\put(2.165,0.55){\Large{$\rho$}}
\put(2.0,0.45){\Large{$X_{0:3}=(0,1,\frac{1}{2},\frac{1}{4})$}}
\put(2.0,0.35){\Large{$X_{(0:3)}=(0,\frac{1}{4},\frac{1}{2},1)$}}
\put(2.0,0.25){\Large{$Y_{1:3}=(2,2,1)$}}

\end{picture}
}
\end{center}
\caption{A growing ranked plane tree and the corresponding partition of $[0,1]$.}
\label{Fig:TreeSeq1}
\end{figure}

\comment{
Let $h:=T_1,T_2,\ldots,T_k$ be a sequence of rdipb-trees such that the number of leaves of $T_i$ is $i$ for all $i=1,\ldots,k$ and such that $T_{i+1}$ can be derived from $T_i$ by choosing a leaf $\ell$ of $T_i$ and attaching two new leaves $\ell\mathsf{L}$ and $\ell\mathsf{R}$ to it.  
Then, $h$ is called the {\em splitting path} of $T_k$ and can also be represented by further labelling the internal nodes of $T_k$ according to the order in which they were split.  
When the internal nodes of an rdipb-tree are further labeled by the their splitting order, we refer to it as a {\em split-labeled rdipb-tree}.  
Note that if we label the root node by $1$ for the first split, then such a split-labeled rdipb-tree satisfies the conditions for being a recursive rdipb-tree (ripb-tree).  
Thus, an ripb-tree is equivalent to a dyadic partitioning sequence $X_{0:k}$ and an rdipb-tree (without further labeling of their internal nodes by their splitting order) is equivalent to a partition $X_{(0:k)}$.
}

A specific sequence $x_{0:k}$ represents a sequence of $k-1$ bisections or splits of $[0,1]$ into $k$ intervals and the order statistics $x_{(0:k)}$ represents the corresponding partition formed by the sequence of splits.  
Let $\mathcal{X}_k$ denote the set of all dyadic partitions of size $k$ which is in bijective correspondence with $\pT_k$, the set of plane trees with $k$ leaves.  
Thus, $\# \mathcal{X}_{k} = \# \pT_k = C_{k-1}$, the $(k-1)$-th Catalan number.  
When there are $k$ intervals in $X_{0:k}$ there are $k$ possible splits leading to $k$ choices for $X_{0:k+1}$.  
Thus, there are $(k-1)!$ distinct sequences for $X_{0:k}$ which is in bijective correspondence with $\rpT_k$, the set of ranked plane trees with $k$ leaves.  

\begin{figure}[htbp]
\begin{center}
\includegraphics[width=\textwidth]{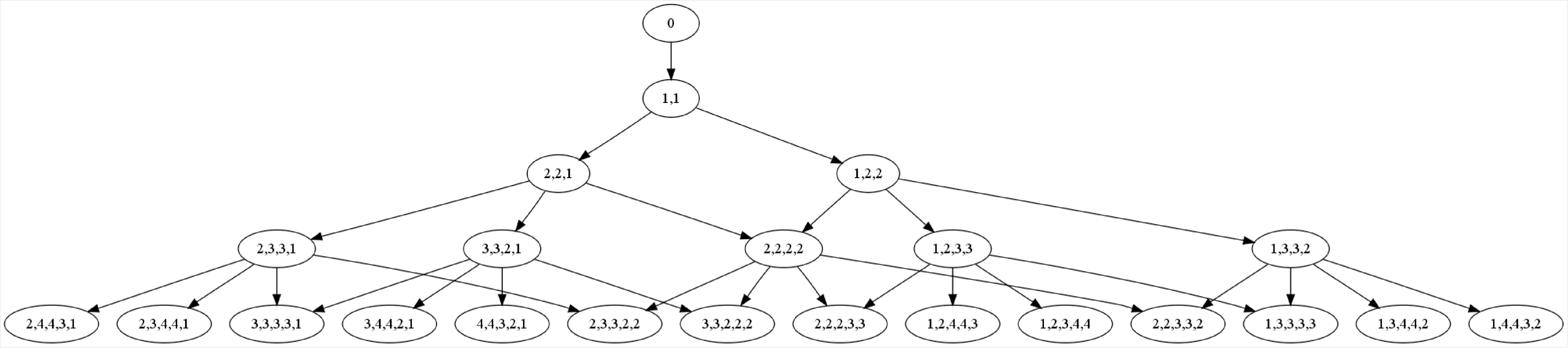}\\
\end{center}
\caption{
State Transition Diagram  with $0$, $1$, $2$, $3$, and $4$ splits.
}
\label{F:LfDpStateTrDiag4Splits}
\end{figure}

\section{Nice properties of some familiar probability models}\label{S:ProbDists}

\begin{model}[General Splitting]\label{M:GLS}
In the most general model, we allow the transition probabilities given in Equation~\eqref{E:GenSplitRule} to possibly depend on the entire history of $X_{0:k-1}$.  
For each level $k \in \{2,3,\ldots\}$, corresponding to $k-1$ splits, the transition matrix $P(x_{0:k-1},i)$ has $(k-1)!$ rows, corresponding to the number of distinct possibilities for $x_{0:k-1}$, and $k-1$ columns, corresponding to the number of intervals or leaves in the partition $x_{(0:k-1)}$ associated with each such sequence $x_{0:k-1}$.  
Since the rows of $P$ must be non-negative and sum to $1$, we can think of each row as being a point in the $(k-1)$-simplex:
\[
\Delta_{k-1} := \{(p_0,p_1,\ldots, p_{k-2}): p_i \geq 0, \forall i, \, \sum_{i=0}^{k-2} p_i = 1 \}
\]  
So each family of such transition probabilities can be thought of as an element of:
\[
\{ \Delta_{k-1}^{(k-1)!} : k \in \{2,3,\ldots \} \}
\]  
that can index the law of a partitioning or tree-building process. 
\end{model}

Besides specifying the largest non-parametric family for the tree-building process, Model~\ref{M:GLS} is too general to provide useful insights.  
We next restrict the construction to satisfy a Markov property on state space $\mathcal{X}$, the set of all dyadic partitions of $[0,1]$.  
We want the probability of $X_k$ given the entire history $X_{0:k-1}$ to only depend on the partition $X_{(0:k-1)}$:
\[
\Pr \{ X_k \mid X_{0:k-1} \} = \Pr \{ X_k \mid X_{(0:k-1)} \} \enspace .
\]

\begin{model}[Markov Splitting]\label{M:MS}
Here we allow the transition probabilities given in Equation~\eqref{E:GenSplitRule} to only depend on the entire history of $X_{0:k-1}$ up to the most recent partition $X_{(0:k-1)}$.  
We can accomplish this by ensuring that the transition probabilities satisfy:
\[
P(x_{0:k-1},i) = \Pr \{ I \, \mid \, X_{0:k-1} \} = \Pr \{ I \, \mid \, X_{(0:k-1)} \} = P(x_{(0:k-1)},i),
\]
where, $X_k = \Mid(X_{(I)},X_{(I+1)})$.
For each level $k \in \{2,3,\ldots\}$, corresponding to $k-1$ splits, the transition matrix $P(x_{0:k-1},i) = P(x_{(0:k-1)},i)$ has only $C_k$ rows, corresponding to the number of distinct possibilities for $x_{(0:k-1)}$, and $k-1$ columns, corresponding to the number of intervals or leaves in the partition $x_{(0:k-1)}$.  
Thus, the construction gives a Markov chain on state space $\mathcal{X}$, the set of all dyadic partitions of $[0,1]$, that satisfies the following Markov property
\[
\Pr \{ X_{(0:k)} \mid X_{(0:1)}, X_{(0:2)},\ldots,X_{(0:k-1)} \} = \Pr \{ X_{(0:k)} \mid X_{(0:k-1)} \}
\]
So each family of such transition probabilities can be thought of as an element of:
\[
\{ \Delta_{k-1}^{C_{k-1}} : k \in \{2,3,\ldots \} \}
\]
that can index the law of a partitioning or tree-building process.  
Finally, instead of the bisection scheme where $X_k = \Mid(X_{(I)},X_{(I+1)})$, we can substitute a more general way of splitting the interval $(X_{(I)},X_{(I+1)})$ into two subintervals.  
For instance, we can sample a point $s$ from a density $g$ rescaled over $(X_{(I)},X_{(I+1)})$, such that $\int_{X_{(I)}}^{X_{(I+1)}}g(x)dx=1$, and use it to split $(X_{(I)},X_{(I+1)})$ into $(X_{(I)},s)$ and $(s,X_{(I+1)})$. 
\end{model}

Next we present some concrete Markov splitting models that are special cases of Model~\ref{M:MS}. 

\begin{model}[Uniform Splitting]\label{M:US}
A concrete example of the conditional random variable $I$ is $\rm{Uniform} \{0,1,\ldots,k-2 \}$ with 
\[
P(x_{0:k-1},i)=P(x_{(0:k-1)},i)=
\begin{cases}
1/(k-1) & \text{if } i \in \{0,1,\ldots,k-2\}, k \in \{2,3,\ldots\} \enspace ,\\
0 & \text{otherwise} \enspace .
\end{cases}
\]  
This corresponds to producing the next split by choosing one of the current intervals or leaves uniformly at random.  
This model assigns uniform probability $1/(k-1)!$ to every ranked planar tree $\rpt{t}_k \in \rpt{T}_k$ with $k-1$ splits and $k$ leaves and is equivalent to the speciation model due to Yule \cite{Yule1924} in phylogenetics and the random permutation model for binary search trees \cite{Dobrow1995}.
\end{model}

\begin{model}[Statistically Equivalent Block or SEB Splitting]\label{M:IfS}
The distribution of $I$ can be given by a probability density function $f$ on $[0,1]$ such that $\int_0^1f(u)du=1$, $f(u) \geq 0$ for every $u \in [0,1]$ and $f(u)=0$ for every $u \notin [0,1]$.   
Under this model we choose the next leaf for splitting according to:
\[
P(x_{0:k-1},i)=P(x_{(0:k-1)},i)=\int_{x_{(i)}}^{x_{(i+1)}}f(u)du, \quad i \in \{0,1,\ldots,k-2\}, k \in \{2,3,\ldots\} .
\]
And having chosen a leaf interval, we split that leaf interval exactly at its mid-point.
For the special case of the continuous random variable on $[0,1]$ with uniform density: $f(u)=1$ if $u \in [0,1]$ and $f(u)=0$ if $u \notin [0,1]$, we have the Uniform SEB Splitting Model where intervals are bisected according to probabilities given by their widths.  
This model is indexed by a density $f$ on $[0,1]$ and produces trees such that its leaf intervals have nearly uniform probability under $f$ since the leaf interval with the most probability under $f$ is immediately bisected.  
This is related to the statistically equivalent blocks rule as a consistent partitioning strategy in density estimation \cite{Gessaman1970}.
\end{model}

\begin{model}[Depth-proportional Splitting]\label{M:DPS}
The distribution of $I$ is obtained by normalizing the depth of each leaf in $x_{(0:k-1)}$ or $y_{1:k}$ as follows:
\[
P(x_{0:k-1},i)=P(x_{(0:k-1)},i)=\frac{-\lg(x_{(i)}-x_{(i-1)})}{\sum_{i=1}^{k-1}-\lg(x_{(i)}-x_{(i-1)})} = \frac{y_i}{\sum_{i=1}^{k-1} y_i}= P(y_{1:k-1},i), 
\]
where, $i \in \{1,\ldots,k-1\}, k \in \{2,3,\ldots\}$.  
Thus, deeper nodes have a higher probability of splitting under this model.
\end{model}

Blum and Fran\c{c}ois \cite{Blum2006} introduced an evolutionary Beta-splitting model based on ideas of Kirkpatrick and Slatkin \cite{Kirkpatrick1993}, and Aldous \cite{Aldous2001}.  
This model is further extended to a biparametric Beta-splitting model for diversification in \cite{SainudiinVeber2016} and for epidemiological transmission in \cite{SainudiinWelch2016}.  
Under this model, a current interval is chosen for bisection with probability given by its width and the point of bisection is drawn from a Beta$(\alpha+1,\beta+1)$ distribution rescaled over the interval being bisected, for some $\alpha>-1$ and $\beta>-1$.  
This way of splitting a leaf interval is in contrast with earlier models where we always bisected the chosen leaf interval at its mid-point.  
The following model is from \cite{SainudiinVeber2016}.

\begin{model}[A biparametric Beta-splitting model]\label{M:BSM}
Let $(B_1,B_2,\ldots)$ be a sequence of independent and identically distributed (i.i.d.) random variables, with the $\calB(\alpha+1,\beta+1)$ distribution. Let also $(U_1,U_2,\ldots)$ be a sequence of i.i.d. random variables with the uniform distribution on $[0,1]$, that is independent of $(B_1,B_2,\ldots)$.

Let $((u_i,b_i))_{i\in \N}$ be a realization of $((U_i,B_i))_{i\in \N}$.  
The tree construction proceeds incrementally as follows, until the tree created has $n$ leaves.  
We start with a single root node, labelled by the interval $[0,1]$.
\begin{itemize}
\item Step $1$: Split the root into a left leaf labelled by $[0,b_1]$ and a right leaf labelled by $[b_1,1]$. Change the label of the root to the integer $1$.
\item Step $2$: If $u_2\in [0,b_1]$, split the left child node of the root into a left leaf and a right leaf respectively labelled by $[0,b_1b_2]$ and $[b_1b_2,b_1]$. If $u_2\in [b_1,1]$, then instead split the right child node of the root into left and right leaves with respective labels $[b_1,b_1+(1-b_1)b_2]$, $[b_1+(1-b_1)b_2,1]$. Label the former leaf that is split during this step by $2$.
\item Step $i$: Find the leaf whose interval label $[a,b]$ contains $u_i$. Change its label to the integer $i$ and split it into a left leaf with label $[a,a+(b-a)b_i]$ and a right leaf with label $[a+(b-a)b_i,b]$.
\item Stop at the end of Step $n-1$.
\end{itemize}

In words, at each step $i$ the labels of the leaves form a partition of the interval $[0,1]$. We find the next leaf to be split by checking which interval contains the corresponding $u_i$ and then $b_i$ is used to split the interval of that former leaf, say with length $\ell$, into two intervals of lengths $b_i\ell$ and $(1-b_i)\ell$. The internal node just created is then labelled by $i$ to record the order of the splits. At the end of step $i$, the tree has $i+1$ leaves, and so we stop the procedure at step $n-1$. Figure~\ref{example1} shows an example of such construction for $n=4$.  The probability of obtaining a ranked plane tree $\rpt{t}_n$ under this model by erasing the interval leaf labels is given in \cite[Thm.~1]{SainudiinVeber2016} by integrating over all possible splits.  
An interpretation of this model for transmission trees in terms of the underlying contact network of hosts undergoing an epidemic is given in \cite{SainudiinWelch2016}.
\comment{
\begin{align}
\mathbb{P}(\tau) &= \prod_{i=1}^{n-1} \left\{\frac{1}{B(\alpha+1,\beta+1)}\int_0^1 b_i^{n_i^L+\alpha}(1-b_i)^{n_i^R+\beta}db_i\right\} \notag\\
& = \prod_{i=1}^{n-1} \frac{B(n_i^L+\alpha+1,n_i^R+\beta+1)}{B(\alpha+1,\beta+1)}, \label{proba}
\end{align}
where $B(\alpha,\beta)$
}
\end{model}

\begin{figure}[t]
\begin{center}
\setlength{\unitlength}{908sp}%
\begingroup\makeatletter\ifx\SetFigFont\undefined%
\gdef\SetFigFont#1#2#3#4#5{%
  \reset@font\fontsize{#1}{#2pt}%
  \fontfamily{#3}\fontseries{#4}\fontshape{#5}%
  \selectfont}%
\fi\endgroup%
\begin{picture}(16135,8908)(-6539,-8450)
{\color[rgb]{0,0,0}\thinlines
\put(-3286,-3136){\circle*{424}}
}%
{\color[rgb]{0,0,0}\put(-2086,-4636){\circle*{424}}
}%
{\color[rgb]{0,0,0}\put(-4486,-4636){\circle*{424}}
}%
{\color[rgb]{0,0,0}\put(-5686,-6136){\circle*{424}}
}%
{\color[rgb]{0,0,0}\put(-3286,-6136){\circle*{424}}
}%
{\color[rgb]{0,0,0}\put(-4499,-7636){\circle*{424}}
}%
{\color[rgb]{0,0,0}\put(-2099,-7636){\circle*{424}}
}%
\thicklines
{\color[rgb]{0,0,0}\put(-3328,-3100){\line( 4,-5){1200}}
}%
{\color[rgb]{0,0,0}\put(-3279,-3092){\line(-4,-5){1200}}
}%
{\color[rgb]{0,0,0}\put(-4461,-4607){\line(-4,-5){1200}}
}%
{\color[rgb]{0,0,0}\put(-4435,-4660){\line( 4,-5){1200}}
}%
{\color[rgb]{0,0,0}\put(-3262,-6182){\line( 4,-5){1200}}
}%
{\color[rgb]{0,0,0}\put(-3315,-6138){\line(-4,-5){1200}}
}%
{\color[rgb]{0,0,0}\thinlines
\put(6001,239){\circle*{424}}
}%
{\color[rgb]{0,0,0}\put(4801,-1261){\circle*{424}}
}%
{\color[rgb]{0,0,0}\put(7201,-1261){\circle*{424}}
}%
{\color[rgb]{0,0,0}\put(-2099,-586){\circle*{424}}
}%
{\color[rgb]{0,0,0}\put(8176,-4411){\circle*{424}}
}%
{\color[rgb]{0,0,0}\put(9376,-5911){\circle*{424}}
}%
{\color[rgb]{0,0,0}\put(6976,-5911){\circle*{424}}
}%
{\color[rgb]{0,0,0}\put(8176,-7411){\circle*{424}}
}%
{\color[rgb]{0,0,0}\put(5776,-7411){\circle*{424}}
}%
\thicklines
{\color[rgb]{0,0,0}\put(6001,239){\line( 4,-5){1200}}
}%
{\color[rgb]{0,0,0}\put(6001,239){\line(-4,-5){1200}}
}%
{\color[rgb]{0,0,0}\put(8205,-4374){\line(-4,-5){1200}}
}%
{\color[rgb]{0,0,0}\put(8185,-4404){\line( 4,-5){1200}}
}%
{\color[rgb]{0,0,0}\put(7024,-5889){\line(-4,-5){1200}}
}%
{\color[rgb]{0,0,0}\put(6985,-5904){\line( 4,-5){1200}}
}%
{\color[rgb]{0,0,0}\put(6751,-2461){\vector( 0,-1){1500}}
}%
{\color[rgb]{0,0,0}\put(6316,-5311){\vector(-1, 0){6690}}
}%
{\color[rgb]{0,0,0}\put(-1049,-586){\vector( 1, 0){4619}}
}%
\put(-2849,-3286){\makebox(0,0)[lb]{\smash{{\SetFigFont{9}{10.8}{\rmdefault}{\mddefault}{\updefault}{\color[rgb]{0,0,0}$1$}%
}}}}
\put(-2849,-6361){\makebox(0,0)[lb]{\smash{{\SetFigFont{9}{10.8}{\rmdefault}{\mddefault}{\updefault}{\color[rgb]{0,0,0}$3$}%
}}}}
\put(-5324,-4861){\makebox(0,0)[lb]{\smash{{\SetFigFont{9}{10.8}{\rmdefault}{\mddefault}{\updefault}{\color[rgb]{0,0,0}$2$}%
}}}}
\put(-6524,-6961){\makebox(0,0)[lb]{\smash{{\SetFigFont{9}{10.8}{\rmdefault}{\mddefault}{\updefault}{\color[rgb]{0,0,0}$[0,b_1b_2]$}%
}}}}
\put(-2399,-8386){\makebox(0,0)[lb]{\smash{{\SetFigFont{9}{10.8}{\rmdefault}{\mddefault}{\updefault}{\color[rgb]{0,0,0}$[b',b_1]$}%
}}}}
\put(-5549,-8386){\makebox(0,0)[lb]{\smash{{\SetFigFont{9}{10.8}{\rmdefault}{\mddefault}{\updefault}{\color[rgb]{0,0,0}$[b_1b_2,b']$}%
}}}}
\put(-2474,-5386){\makebox(0,0)[lb]{\smash{{\SetFigFont{9}{10.8}{\rmdefault}{\mddefault}{\updefault}{\color[rgb]{0,0,0}$[b_1,1]$}%
}}}}
\put(6451, 89){\makebox(0,0)[lb]{\smash{{\SetFigFont{9}{10.8}{\rmdefault}{\mddefault}{\updefault}{\color[rgb]{0,0,0}$1$}%
}}}}
\put(8626,-4561){\makebox(0,0)[lb]{\smash{{\SetFigFont{9}{10.8}{\rmdefault}{\mddefault}{\updefault}{\color[rgb]{0,0,0}$1$}%
}}}}
\put(6151,-6136){\makebox(0,0)[lb]{\smash{{\SetFigFont{9}{10.8}{\rmdefault}{\mddefault}{\updefault}{\color[rgb]{0,0,0}$2$}%
}}}}
\put(-2474,-1336){\makebox(0,0)[lb]{\smash{{\SetFigFont{9}{10.8}{\rmdefault}{\mddefault}{\updefault}{\color[rgb]{0,0,0}$[0,1]$}%
}}}}
\put(4351,-2011){\makebox(0,0)[lb]{\smash{{\SetFigFont{9}{10.8}{\rmdefault}{\mddefault}{\updefault}{\color[rgb]{0,0,0}$[0,b_1]$}%
}}}}
\put(6976,-2011){\makebox(0,0)[lb]{\smash{{\SetFigFont{9}{10.8}{\rmdefault}{\mddefault}{\updefault}{\color[rgb]{0,0,0}$[b_1,1]$}%
}}}}
\put(7801,-8161){\makebox(0,0)[lb]{\smash{{\SetFigFont{9}{10.8}{\rmdefault}{\mddefault}{\updefault}{\color[rgb]{0,0,0}$[b_1b_2,b_1]$}%
}}}}
\put(5026,-8161){\makebox(0,0)[lb]{\smash{{\SetFigFont{9}{10.8}{\rmdefault}{\mddefault}{\updefault}{\color[rgb]{0,0,0}$[0,b_1b_2]$}%
}}}}
\put(-1649,-6736){\makebox(0,0)[lb]{\smash{{\SetFigFont{9}{10.8}{\rmdefault}{\mddefault}{\updefault}{\color[rgb]{0,0,0}$b'=b_1b_2+b_3b_1(1-b_2)$}%
}}}}
\put(7126,-3211){\makebox(0,0)[lb]{\smash{{\SetFigFont{9}{10.8}{\rmdefault}{\mddefault}{\updefault}{\color[rgb]{0,0,0}$0<u_2<b_1$}%
}}}}
\put(9301,-6736){\makebox(0,0)[lb]{\smash{{\SetFigFont{9}{10.8}{\rmdefault}{\mddefault}{\updefault}{\color[rgb]{0,0,0}$[b_1,1]$}%
}}}}
\put(-74,-286){\makebox(0,0)[lb]{\smash{{\SetFigFont{9}{10.8}{\rmdefault}{\mddefault}{\updefault}{\color[rgb]{0,0,0}$0<u_1<1$}%
}}}}
\put(1801,-5011){\makebox(0,0)[lb]{\smash{{\SetFigFont{9}{10.8}{\rmdefault}{\mddefault}{\updefault}{\color[rgb]{0,0,0}$b_1b_2<u_3<b_1$}%
}}}}
\end{picture}%
\caption{\label{example1}An example of a Beta-splitting tree construction for $k=3$.}
\end{center}
\end{figure}

\begin{remark}[beta-splitting distribution over permutations]
Due to the bijection, $\rpT_n \ni \rpt{t}_n \leftrightarrow \sigma \in \ps_{n-1}$ via the increasing binary tree-lifting (Construction~\ref{R:TreeLifting}), 
we can transform samples from such $(\alpha,\beta)$-specified distributions over $\rpT_n$ to those over $\ps_{n-1}$, the $(n-1)!$ permutations of $[n-1]$. An interpretable biparametric family of distributions over permutations is naturally obtained by lifting the beta-splitting trees for each $(\alpha,\beta) \in (-1,\infty)\times (-1,\infty)$. 
This induced biparametric family of distributions over permutations can in turn be used to study possibly new properties of various randomized algorithms (including various sorting algorithms) that typically assume the input distribution to be the uniform distribution over permutations, i.e., the special case of $(\alpha,\beta) = (0,0)$ in the family.
\end{remark}

\subsection{Two properties and their consequences}

\begin{definition}[split-exchangeable model]
If $\Pr\{X_{0:k}=x_{0:k} \}$ or $\Pr \{ \rpt{T}_k = \rpt{t}_k \}$ obtained from a Markov splitting model is identical for every $x_{0:k}$ or $\rpt{t}_k$ that has the same partition $x_{(0:k)}$ or the same planar tree $\pt{t}_k$, then the model is said to be {\em split-exchangeable}.  
Note that Models~\ref{M:US}, \ref{M:IfS} and \ref{M:BSM} are split-exchangeable while Model~\ref{M:DPS} is not.
\end{definition}

\begin{theorem}[Split-exchangeable planar tree probability]\label{T:SE}
The probability of reaching a given partition or plane binary tree under a Markov splitting model that satisfies split-exchangeability is:
\[
\Pr \{ X_{(0:k)}  = x_{(0:k)}\} = B(x_{(0:k)}) \times \Pr\{x_{0:k}\} . 
\] 
or equivalently in tree notation:
\[
\Pr \{ \pt{T}_k = \pt{t}_k \} = B(\pt{t}_k) \times \Pr\{ \rpt{t}_k \}
\]
where $\rpt{t}_k \in \{ \rpt{t}_k: \rpt{t}_k \mapsto \pt{t}_k \}$ and $B$ is the Catalan coefficient.
\begin{proof}
Since the probabilities are split-exchangeable, $\Pr(\rpt{t})$ is identical for each $\rpt{t}_k \in \{ \rpt{t}_k: \rpt{t}_k \mapsto \pt{t}_k \}$.  
\[
\Pr \{ \pt{T}_k = \pt{t}_k \} 
= \Pr \{ \rpt{t}_k: \rpt{t}_k \mapsto \pt{t}_k\} 
= \#\{ \rpt{t}_k: \rpt{t}_k \mapsto \pt{t}_k\} \times \Pr\{ \rpt{t}_k \}
= B(\pt{t}_k) \times \Pr\{ \rpt{t}_k \}
\]
The last equality is due to \eqref{E:CatCoeff}.
\end{proof}
\end{theorem}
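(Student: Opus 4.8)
The plan is to reduce the statement to the combinatorial count already in Lemma~\ref{L:CatCoeff} together with a one-line application of the split-exchangeability hypothesis. First I would recall that, under Construction~\ref{R:PlaneBinaryTree}, a complete realization $x_{0:k}$ of the process is the same data as a ranked plane tree $\rpt{t}_k \in \rpT_k$: the integer rank carried by each internal node records the step at which that leaf was split. Forgetting the ranks sends $\rpt{t}_k$ to a plane tree $\pt{t}_k \in \pT_k$, equivalently sends the history $x_{0:k}$ to its order statistics $x_{(0:k)}$, the partition it generates. Consequently the event $\{X_{(0:k)} = x_{(0:k)}\}$ is the disjoint union of the events $\{X_{0:k} = \tilde x_{0:k}\}$ over all histories $\tilde x_{0:k}$ with the same order statistics $x_{(0:k)}$, i.e.\ over the fiber $\{\rpt{t}_k : \rpt{t}_k \mapsto \pt{t}_k\}$.

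Second, I would plug in the two ingredients. By Lemma~\ref{L:CatCoeff} this fiber has cardinality exactly $B(\pt{t}_k)$, the Catalan coefficient. By the definition of split-exchangeability, $\Pr\{\rpt{T}_k = \rpt{t}_k\}$ is the same number for every ranked plane tree $\rpt{t}_k$ in that fiber; denote this common value $\Pr\{\rpt{t}_k\}$. Since the fiber events are pairwise disjoint, additivity of probability gives $\Pr\{\pt{T}_k = \pt{t}_k\} = \#\{\rpt{t}_k : \rpt{t}_k \mapsto \pt{t}_k\} \cdot \Pr\{\rpt{t}_k\} = B(\pt{t}_k)\,\Pr\{\rpt{t}_k\}$, which is the tree form of the claim; the partition form is the same statement transcribed through the bijection $\pT_k \leftrightarrow \mathcal{X}_k$.

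There is no analytic difficulty here; the proof is entirely bookkeeping, so the only point to be careful about is that the two quantities one is tempted to call ``$B(\pt{t}_k)$'' genuinely agree: the number of ranked plane trees refining $\pt{t}_k$ (what Lemma~\ref{L:CatCoeff} counts) must equal the number of admissible split-orders of $\pt{t}_k$ (what the fiber of histories counts). This is immediate from the fact that the ranks in Construction~\ref{R:PlaneBinaryTree} are by construction the splitting order, but I would state it explicitly so the appeal to Lemma~\ref{L:CatCoeff} is unambiguous. A small further remark worth including: split-exchangeability is phrased as a property of the one-step transition matrices, so the equality of two path probabilities sharing a partition really uses the Markov structure of Model~\ref{M:MS} --- one multiplies matching one-step factors along the two histories --- which is why ``split-exchangeable'' is exactly the hypothesis needed rather than merely a sufficient condition.
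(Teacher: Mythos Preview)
Your proposal is correct and follows exactly the paper's approach: decompose the plane-tree event as a disjoint union over its ranked-plane-tree fiber, use split-exchangeability to equate the summands, and invoke Lemma~\ref{L:CatCoeff} for the fiber size. One small correction to your closing remark: in this paper split-exchangeability is \emph{defined} directly at the level of full path probabilities $\Pr\{X_{0:k}=x_{0:k}\}$ (not at the level of one-step transitions), so the equality of path probabilities over the fiber is immediate from the definition and no separate appeal to the Markov structure of Model~\ref{M:MS} is needed.
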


\begin{definition}[plane-invariant model]
If $\Pr \{ \pt{T}_k = \pt{t}_k \}$ is identical for every $\pt{t}_k$ that has the same tree $t_k$ then the model is said to be {\em plane-invariant}.
For example, Model~\ref{M:US} is plane-invariant but Model~\ref{M:IfS} is not.
\end{definition}

\begin{theorem}[Split-exchangeable plane-invariant tree probability]\label{T:SEPI}
The probability of a tree under a Markov splitting model that satisfies split-exchangeability and plane-invariance is:
\[
\Pr \{ T_k = t_k \} = B(t_k) \times 2^{k-1-s(t_k)} \times \Pr\{ \rpt{t}_k \}
\]
where $\rpt{t}_k \in \{ \rpt{t}_k: \rpt{t}_k \mapsto \pt{t}_k \}$, and $\pt{t}_k \in \{\pt{t}_k: \pt{t}_k \mapsto t_k \}$, $B$ is the Catalan coefficient and $s(t_k)$ is the number of symmetry nodes in $t_k$.
\begin{proof}
Since the probabilities are split-exchangeable, $\Pr(\rpt{t})$ is identical for each $\rpt{t}_k \in \{ \rpt{t}_k: \rpt{t}_k \mapsto \pt{t}_k \}$. 
And, since the probabilities are plane-invariant, $\Pr(\pt{t})$ is identical for each $\pt{t}_k \in \{ \pt{t}_k: \pt{t}_k \mapsto t_k \}$. 
Therefore,
\[
\Pr \{ T_k = t_k \} 
= \Pr \{ \rpt{t}_k: \rpt{t}_k \mapsto t_k\} 
= B(\pt{t}_k) \times 2^{k-1-s(t_k)} \times \Pr\{ \rpt{t}_k \}
\]
The last equality is due to \eqref{E:NumRptPerT}.
\end{proof}
\end{theorem}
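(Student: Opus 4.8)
The plan is to reduce the statement to the two counting facts already in hand — Theorem~\ref{T:SE}, which governs the ranked-plane-to-plane collapse, and equations~\eqref{E:ShapeCoeff} and~\eqref{E:NumRptPerT}, which govern the plane-to-tree collapse — and then to fuse them. First I would fix a tree $t_k \in \T_k$, choose any plane tree $\pt{t}_k$ with $\pt{t}_k \mapsto t_k$, and any ranked plane tree $\rpt{t}_k$ with $\rpt{t}_k \mapsto \pt{t}_k$. Since the event $\{T_k = t_k\}$ is the disjoint union, over all $\rpt{t}_k' \in \rpT_k$ with $\rpt{t}_k' \mapsto t_k$, of the atomic events $\{\rpT_k = \rpt{t}_k'\}$, the whole argument comes down to (i) showing every such $\rpt{t}_k'$ has the same probability as $\rpt{t}_k$, and (ii) counting how many there are.

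For step (i), split-exchangeability gives that $\Pr\{\rpT_k = \cdot\}$ is constant on each fiber $\{\rpt{t}_k': \rpt{t}_k' \mapsto \pt{t}_k'\}$ sitting over a fixed plane tree $\pt{t}_k'$. To pass between different plane-tree fibers that nonetheless collapse to the same $t_k$, I would invoke Theorem~\ref{T:SE}: for any two plane trees $\pt{t}_k'$ and $\pt{t}_k''$ mapping to $t_k$ one has $\Pr\{\pT_k = \pt{t}_k'\} = B(\pt{t}_k')\Pr\{\rpt{t}_k'\}$ and likewise for $\pt{t}_k''$; plane-invariance forces the two left-hand sides to agree, and the observation made in the text just before~\eqref{E:NumRptPerT} gives $B(\pt{t}_k') = B(\pt{t}_k'') = B(t_k)$ because the Catalan coefficient depends only on the multiset of subtree sizes, which is unchanged by forgetting planarity. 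Hence $\Pr\{\rpt{t}_k'\} = \Pr\{\rpt{t}_k''\}$, so $\Pr\{\rpT_k = \cdot\}$ is in fact constant on the whole set $\{\rpt{t}_k': \rpt{t}_k' \mapsto t_k\}$.

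For step (ii), the size of that set is exactly $|\{\rpt{t}_k' \in \rpT_k: \rpt{t}_k' \mapsto t_k\}| = B(t_k)\,2^{k-1-s(t_k)}$ by~\eqref{E:NumRptPerT}. Multiplying the common atomic probability $\Pr\{\rpt{t}_k\}$ by this count yields $\Pr\{T_k = t_k\} = B(t_k)\,2^{k-1-s(t_k)}\,\Pr\{\rpt{t}_k\}$, which is the claim. Equivalently, one can assemble the same answer in two stages: plane-invariance gives $\Pr\{T_k = t_k\} = C(t_k)\,\Pr\{\pt{t}_k\}$ with $C(t_k) = 2^{k-1-s(t_k)}$ by~\eqref{E:ShapeCoeff}, and then Theorem~\ref{T:SE} replaces $\Pr\{\pt{t}_k\}$ by $B(\pt{t}_k)\,\Pr\{\rpt{t}_k\} = B(t_k)\,\Pr\{\rpt{t}_k\}$.

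The one genuinely delicate point — the main obstacle, such as it is — is the cross-fiber comparison in step (i): split-exchangeability is a statement about plane-tree equivalence classes, so on its own it does not say that two ranked plane trees collapsing to the same non-plane tree through \emph{different} intermediate plane trees are equiprobable. Closing that gap requires either plane-invariance routed through Theorem~\ref{T:SE}, as above, or the direct fact that $B$ is itself plane-invariant on each tree class; everything past that is bookkeeping with the already-established counts $B(t_k)$ and $2^{k-1-s(t_k)}$.
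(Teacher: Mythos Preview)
Your proposal is correct and follows essentially the same approach as the paper: both arguments use split-exchangeability together with plane-invariance to reduce the probability of $t_k$ to the count in~\eqref{E:NumRptPerT} times the common atomic probability $\Pr\{\rpt{t}_k\}$. Your write-up is in fact more explicit than the paper's about the cross-fiber step --- why $\Pr\{\rpt{t}_k\}$ is constant across ranked plane trees lying over \emph{different} plane trees with the same underlying $t_k$ --- a point the paper leaves tacit.
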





\subsection{Applications of split-exchangeability and plane-invariance}
 
Theorems \ref{T:SE} and \ref{T:SEPI} can be useful in obtaining probabilities of trees at coarser resolutions from the probabilities at the finer resolution of ranked planar trees if they satisfy split-exchangeability (and plane-invariance).  
For example, Model~\ref{M:US} for Yule trees with $\Pr(\rpt{t}_k)=1/(k-1)!$ for every $\rpt{t}_k \in \rpt{T}_k$, is split-exchangeable and plane-invariant and therefore by Theorem~\ref{T:SEPI},
\[
\Pr \{ T_k = t_k \} = \frac{(n-1)!}{\prod_{v \in \iV} \lfloor t_k(v) \rfloor} \times 2^{k-1-s(t_k)} \times \frac{1}{(k-1)!}.
\]
This gives the nonuniform probability of an unranked and nonplanar Yule tree in terms of the product of its subtree splits and symmetry nodes.

The Beta-splitting model, a biparametric generalization of the Yule model and several other speciation models in phylogenetics, is split-exchangeable for any $\alpha$ and $\beta$, but only plane-invariant when $\alpha=\beta$.  
These properties of the Beta-splitting model are used in \cite{SainudiinVeber2016} to readily obtain probabilities of planar trees, nonplanar ranked trees and nonplanar unranked trees from those of their corresponding ranked planar trees.

In nonparametic density estimation, where one has to reconstruct the unknown density from which $n$ data points have been sampled, a fundamental problem is to obtain data-adaptive partitions of the support set, say $[0,1]$ without loss of generality.  
Such partitioning schemes can be formulated as split-exchangeable Markov chains on $\rpt{T}_k$ by generalizing Model~\ref{M:IfS}, where 
the objects of interest are the partitions encoded by $\pt{T}_k$ with probabilities given by Theorem~\ref{T:SE}.

\subsection{Thompson's group $F$} 
We next describe how probability models on plane binary trees are used to obtain insights in geometric group theory involving Thompson's group.  
Thompson's group $F$ is a group with a range of unusual properties and a wide range of characterizations.
Here, we consider elements of Thompson's group $F$ as piecewise-linear orientation-preserving homeomorphisms of the unit interval $[0,1]$ to itself, with slopes that are powers of two, and with breakpoint sets
that are contained in the dyadic rationals.  Such elements can be described by pairs of rooted binary trees of the same size, where the corresponding group element is the piecewise-linear interpolation of the corresponding dyadic partitions described by the trees.  The widths of the intervals is exactly the dyadic width described in Representation \ref{depths}.  
For further background on Thompson's group $F$, see the introduction by Cannon, Floyd and Parry \cite{MR1426438}.  

\begin{figure}
\begin{center}
\includegraphics[width=.6 \textwidth]{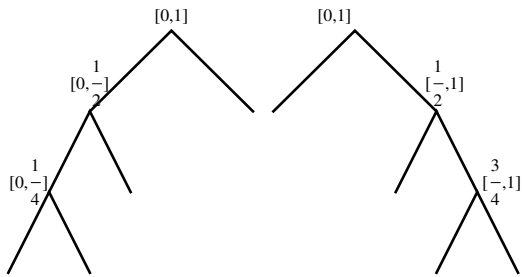}
\caption{A pair of trees giving partitions $(0,\frac14,\frac12,1)$ and $(0,\frac12,\frac34,1)$ respectively }
\label{fig:treepair}
\end{center}
\end{figure}

\begin{figure}
\begin{center}
\includegraphics[width=.6 \textwidth]{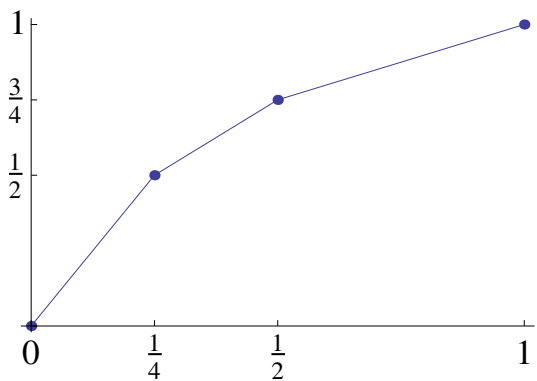}
\caption{Interpolating between the  two partitions of the unit interval  $\{0, \frac14, \frac12, 1\}$ and $\{0, \frac12, \frac34, 1\}$ to make a piecewise-linear homeomorphism}
\label{fig:interp}
\end{center}
\end{figure}

\subsubsection{Sampling in $F$}

Thompson's group $F$ is the simplest known example of a wide range of pathological group-theoretic behavior, with it serving as counterexamples to a wide range of conjectures.
Furthermore, there are a number of properties of $F$ which are not known despite a great deal of study over the last 40 years.  There have thus been a number of computational
experiments by a range of authors \cite{yulereduce, MR3101764, MR3035049, MR3043436,MR2646034,MR2566302,MR2395786} designed to yield insight into the group-theoretic
properties of $F$.  For those involving sampling, there are three methods that have been used to sample elements at random of increasingly large subsets of $F$.   The first is
to sample words chosen at random from the balls of size $n$ with respect to the standard word metric for $F$ (using the generating set $\{x_0,x_1\}$).  This has a number of
desired properties but unfortunately the sizes of the metric balls are not known generally, not even asymptotically, and though it is known that the growth rate is exponential, the
growth rate is unknown.  There are proven upper and lower bounds for the exponential growth rate \cite{MR2104775} and compelling computational evidence \cite{MR2646034} that
the growth rate is extremely close to the upper bound but not knowing the growth properties prevents the analysis for sampling to understand the asymptotic behavior from being feasible.
The second method of sampling \cite{MR2566302,MR3101764,MR3035049} 
analyzes sampling in $F$ by choosing tree pairs of size $n$ uniformly at random, performing the appropriate reductions, and considering the resulting group elements.  Here, the trees are selected uniformly at random from all trees of size $n$.  The third way \cite{yulereduce}, selects trees via a bifurcation process modeled on the Yule distribution \cite{Yule1924} for rooted binary trees.

\subsubsection{Distortion of sampling in $F$}

The Catalan coefficients described here exactly describe the sampling bias between the two methods of tree pair generation for unreduced tree pair diagrams.  That is, a given pair $(\pt{s}_n,\pt{t}_n) \in \pT_n^2$ has weight $1$
with respect to the uniform distribution on trees, and has chance of selection $1/C_n^2$, where $C_n$ is the $n$-th Catalan number.  The same tree pair has weight $B(\pt{s}_n) B(\pt{t}_n)$ where $B(\pt{t}_n)$ is the number of ways that tree $\pt{t}_n$ can arise via a bifurcation process, giving a chance of selection of  $B(\pt{s}_n) B(\pt{t}_n)/(n!)^2$.  We note that a number of authors have analyzed different properties of these two tree distributions in the unordered case-- see, for example, McKenzie and Steel \cite{cherries} where the distribution of the number of sibling pairs (or ``cherries'') is analyzed.


\subsection*{Acknowledgment}
RS~was partly supported by a Sabbatical Grant from College of Engineering, University of Canterbury, a Visiting Scholarship at Department of Mathematics, Cornell University, Ithaca, NY, USA, consulting revenues from Wynyard Group and by the chaire Mod\'elisation Math\'ematique et Biodiversit\'e of Veolia Environnement-\'Ecole Polytechnique-Museum National d'Histoire Naturelle-Fondation X. 
SC is grateful for NSF support through grant \#1417820 and to Simons Foundation for grant \#234548.

\bibliographystyle{plain}
\bibliography{references}


\end{document}